\documentclass{article}

\usepackage{graphicx}
\usepackage[utf8]{inputenc}
\usepackage[T1]{fontenc}
\usepackage[english]{babel}
\usepackage{amssymb,amsthm,amsmath,amstext,amscd}
\usepackage{xcolor}
\usepackage[normalem]{ulem}
\usepackage{tikz-cd}
\usepackage{indentfirst}
\usepackage{hyperref}
\usepackage{verbatim}
\usepackage{enumitem}

\allowdisplaybreaks
\numberwithin{equation}{section}
\theoremstyle{plain}

\newtheorem{theorem}{Theorem}

\newtheorem{corollary}[theorem]{Corollary}
\newtheorem{proposition}[theorem]{Proposition}
\newtheorem{lemma}[theorem]{Lemma}

\newtheorem{remark}[theorem]{Remark}

\newtheorem{problem}[theorem]{Problem}

\newcommand{\der}{{\rm Der}}

\newcommand{\degt}{\deg_t}
\newcommand{\Span}{\operatorname{span}}
\newcommand{\Z}{\operatorname{Z}}
\DeclareMathOperator{\LND}{LND}
\DeclareMathOperator{\LFD}{LFD}

\DeclareMathOperator{\Aut}{Aut}

\DeclareMathOperator{\LFDer}{LFD}
\DeclareMathOperator{\ML}{ML}
\DeclareMathOperator{\ad}{ad}
\DeclareMathOperator{\id}{id}

\title{A Characterization of Local Nilpotence for Derivations of Ore Extensions}
\author{Rene Baltazar, Samuel A. Lopes and Oscar Morales}
\date{}

\begin{document}

\maketitle

\begin{abstract}

Let $\Bbbk$ be an algebraically closed field of characteristic zero. A recent theorem of I. Pan characterizes the nonzero locally nilpotent derivations of $\Bbbk[x,y]$ in terms of their isotropy groups: such a
derivation is locally nilpotent if and only if its isotropy group contains automorphisms of arbitrarily large degree. We study analogues of this characterization for several noncommutative algebras related to the affine plane. Our main result shows that Pan's characterization extends to the differential Ore extensions $A_h=\Bbbk[x][t;h(x)\partial_x]$, where $h\in\Bbbk[x]\setminus\Bbbk$. For the first Weyl algebra $A_1$, we establish the same equivalence for every nonzero locally finite derivation. For the quantum plane and the first quantum Weyl algebra, known results imply that the corresponding equivalence holds vacuously for nonzero derivations. In contrast, the converse fails for the free associative algebra $\Bbbk\langle x,y\rangle$: we construct a non-locally-nilpotent inner derivation whose isotropy group has unbounded degree. We further analyze this failure through the behavior of automorphisms and derivations under abelianization.

\end{abstract}

\section{Introduction}\label{section1}

Let $\Bbbk$ be an algebraically closed field of characteristic zero. For a $\Bbbk$-algebra $A$, we denote by $\Aut(A)$ and $\der(A)$ the group of $\Bbbk$-automorphisms and the space of $\Bbbk$-derivations of
$A$, respectively. The group $\Aut(A)$ acts by conjugation on $\der(A)$ and, for $D\in\der(A)$, the \emph{isotropy group of $D$} is 
\[
\Aut_D(A)=\{\rho\in\Aut(A)\mid \rho D=D\rho\}.
\]

A derivation $D$ is said to be \emph{locally finite} if, for every $a\in A$, the $\Bbbk$-vector space $\Span_{\Bbbk}\{D^n(a)\mid n\geq0\}$ is finite-dimensional. It is said to be \emph{locally nilpotent} if,
for every $a\in A$, there exists $n\geq1$ such that $D^n(a)=0$. Thus, every locally nilpotent derivation is locally finite. We denote by $\LFD(A)$ and $\LND(A)$ the sets of locally finite and locally nilpotent derivations of $A$, respectively.

A classical theorem of Rentschler \cite{Re1968} states that every locally nilpotent derivation of $\Bbbk[x,y]$ is, after a suitable polynomial change of coordinates, of the form $f(x)\partial_y$, where $f(x)\in\Bbbk[x]$. Locally finite derivations of $\Bbbk[x,y]$ are also classified, up to conjugation by polynomial automorphisms; see \cite[Corollary 4.7]{V92}. For example, Van den Essen used this classification to obtain an algorithm for deciding whether a polynomial vector field in dimension two has a polynomial flow; see \cite[Section 5.]{V92}.

A central motivation for this work is the following theorem of I. Pan \cite{Pan2022}, which characterizes local nilpotence of plane polynomial derivations in
terms of their isotropy groups. The proof of this characterization relies strongly on earlier work of R. Baltazar and I. Pan \cite{BaltazarPan2021} concerning the structure of isotropy groups of polynomial derivations.

\begin{theorem}[{\cite[Theorem 1.1]{Pan2022}}]\label{thm_Pan}
Let $D$ be a nonzero derivation of $\Bbbk[x,y]$. The following conditions are equivalent:
\begin{enumerate}[
    label=\textnormal{(\alph*)},
    itemsep=2pt,
    topsep=3pt
]
    \item $D$ is locally nilpotent;
    \item for every $d\geq1$, there exists
    $\rho=(f,g)\in\Aut_D(\Bbbk[x,y])$ such that
    $\deg(f)\geq d$ or $\deg(g)\geq d$;
    \item $\Aut_D(\Bbbk[x,y])$ is not an algebraic subgroup of
    $\Aut(\Bbbk[x,y])$;
    \item $\Aut_D(\Bbbk[x,y])$ is an infinite-dimensional algebraic
    subgroup of $\Aut(\Bbbk[x,y])$ in the sense of
    Shafarevich--Kambayashi.
\end{enumerate}
\end{theorem}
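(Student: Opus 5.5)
Since this is Pan's theorem, cited from \cite{Pan2022}, the plan is to reconstruct its proof from Rentschler's theorem \cite{Re1968}, the Jung--van der Kulk amalgamated product structure of $\Aut(\Bbbk[x,y])$, and the description of isotropy groups in \cite{BaltazarPan2021}. Everything is invariant under conjugation: replacing $D$ by $\sigma D\sigma^{-1}$ replaces $\Aut_D$ by $\sigma\Aut_D\sigma^{-1}$. Conjugating by the fixed automorphism $\sigma$ changes degrees by at most a bounded multiplicative factor. It also preserves algebraicity and infinite-dimensionality. So we may put $D$ in a normal form.

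\textbf{(a)$\Rightarrow$(b),(c),(d).} By Rentschler, assume $D=f(x)\partial_y$ with $f\neq0$. For every $p\in\Bbbk[x]$, the triangular automorphism $\rho_p=(x,\,y+p(x))$ commutes with $D$. Indeed, $\rho_pD(x)=0=D\rho_p(x)$ and $\rho_pD(y)=f(x)=D(y+p(x))$. Taking $\deg p$ arbitrarily large gives (b). The subgroup $\{\rho_p\}\cong(\Bbbk[x],+)$ sits inside $\Aut_D$. Together with the bounded-degree part (diagonal and affine pieces preserving $f$ up to scalar), one identifies $\Aut_D$ as $\{(ax+b,\,cy+p(x)) : f(ax+b)=c f(x)\}$. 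This is a closed ind-subgroup with filtration pieces of unbounded dimension, hence infinite-dimensional in the Shafarevich--Kambayashi sense, giving (d). An algebraic subgroup has bounded degree, so (b) implies (c), and (c) follows.

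\textbf{Converse.} It suffices to show: if $D$ is not locally nilpotent, then $\Aut_D$ has bounded degree. Bounded degree makes $\Aut_D$ a closed subgroup of some $\Aut(\Bbbk[x,y])_{\le d}$, hence algebraic. This refutes (b), (c) and (d) simultaneously; (d) is excluded because an algebraic group is finite-dimensional.

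\textbf{Main step.} Let $G=\Aut_D$ and suppose $G$ contains elements of arbitrarily large degree. By Jung--van der Kulk, $\Aut(\Bbbk[x,y])=A*_{A\cap B}B$, acting on the Bass--Serre tree. By results of Lamy / Furter--Lamy on subgroups of this amalgam, one of the following holds:
\begin{enumerate}
\item $G$ contains an element of algebraic length $\ge 2$ (a hyperbolic, Hénon-type element);
\item $G$ fixes a vertex up to conjugacy, so it is conjugate into $B$ (de Jonquières) since $A$ has bounded degree;
\item $G$ fixes an end, and is then again conjugate into a subgroup of $B$.
\end{enumerate}

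In case 1, I would use that a Hénon map $\rho$ has dynamical degree $>1$. The key fact, which \cite{BaltazarPan2021} essentially proves, is that a nonzero derivation commuting with a Hénon automorphism must vanish. The argument: $\rho$ acts on $\der$ by $D\mapsto\rho D\rho^{-1}$, which multiplies degrees of components exponentially along iterates unless $D=0$. So case 1 is impossible.

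In cases 2 and 3, conjugate so that $G\subset B$ contains $(ax+b,\,cy+p_n(x))$ with $\deg p_n\to\infty$. Write $D=P\partial_x+Q\partial_y$ and compare the commutation relations in $y$-degree. Commuting with $\rho_n$ forces $P=P(x)$, and $P$ must satisfy $P(ax+b)=aP(x)$. In the $y$-direction, $Q(ax+b,\,cy+p_n)-cQ=p_n'(x)P(x)$. Since $\deg p_n$ is unbounded, a degree count should force $P=0$ and $\partial_yQ=0$, so $D=Q(x)\partial_y$ is locally nilpotent. This contradicts the hypothesis.

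I expect the main obstacle to be the dichotomy step: excluding the hyperbolic case and making the elliptic/end-fixing reduction to $B$ rigorous. This is exactly where the structure theory of \cite{BaltazarPan2021} is needed.
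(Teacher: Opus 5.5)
Your implication (a)$\Rightarrow$(b),(c),(d) is fine. So is the closing computation inside the de Jonquières group $B$: it is the analogue of Lemma~\ref{Dx_polynomial}, and essentially the step the paper's remark attributes to Pan in the case $0<e(D)<\infty$. The gap is in reducing the converse to that computation.

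\textbf{Case~3 is false as stated.} A subgroup that fixes an end of the Bass--Serre tree but no vertex is, by definition, contained in no vertex stabilizer. Hence it lies in no conjugate of $A$ or of $B$. Such a group is the increasing union of the pointwise stabilizers of subrays. Each of these lies in a conjugate of $A\cap B$ and so has bounded degree, while the union need not. Lamy's classification lists these groups as a separate case precisely because they cannot be conjugated into $A$ or $B$. You therefore still need an argument that $\Aut_D$ is never of this type, and the proposal gives none.

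\textbf{Case~1 is not proved either.} The claim that no nonzero derivation commutes with a H\'enon automorphism carries the whole weight of excluding hyperbolic elements. The justification ``conjugation by $\rho^n$ multiplies degrees exponentially unless $D=0$'' presupposes that no cancellation occurs in $\rho^nD\rho^{-n}$, which is exactly what must be shown. Suppose the degrees along the orbit stayed bounded. Then $\rho$ would act linearly on a finite-dimensional space of derivations, so it would have an eigenvector there. That means $\rho$ would preserve the foliation of some nonzero vector field. Ruling this out is a genuine theorem about foliations invariant under maps of dynamical degree $>1$, not a degree count, and a pointer to Baltazar--Pan does not supply it.

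\textbf{Comparison with the paper's route.} The route the paper points to avoids the tree entirely. Pan organizes the argument by $e(D)$, the number of $D$-stable reduced principal height-one ideals, that is, by the invariant curves of $D$. The isotropy group must permute these curves, since $\rho\in\Aut_D$ sends a $D$-stable $(f)$ to the $D$-stable $(\rho(f))$. This invariant-curve structure, together with Baltazar--Pan's results on isotropy groups, is what places $\Aut_D$ in the situation where your polynomial-parameter computation applies. Your proposal has no substitute for that mechanism.
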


A natural question suggested by Pan's theorem is whether the same characterization of local nilpotence survives in noncommutative deformations of the affine plane. In this direction, we consider the differential Ore extensions
\[
A_h=\Bbbk[x][t;h(x)\partial_x]
=\Bbbk\langle x,t\mid tx-xt=h(x)\rangle,
\]
where $h\in\Bbbk[x]\setminus\Bbbk$. Our main result shows that Pan's criterion extends completely to this family: for every $D\in\der(A_h)$. More precisely, our main result,
Theorem \ref{main_Ah}, shows that a derivation of $A_h$ is locally nilpotent if and only if its isotropy group contains automorphisms of arbitrarily large degree.

The two extremal cases of the family $A_h$ recover two classical algebras. If $h=0$, then $A_h=\Bbbk[x,t]$, and Pan's theorem applies. If $h\in\Bbbk^\ast$, then, after a nonzero scalar rescaling, $A_h$ is isomorphic to the first Weyl algebra $A_1$. If $h\in\Bbbk^\ast$, then, after a nonzero
scalar rescaling, one obtains the first Weyl algebra
\[
A_1=\Bbbk\langle p,q\mid pq-qp=1\rangle.
\]

The first Weyl algebra requires a separate analysis. It is not known whether, for an arbitrary nonzero derivation $D$, the existence of automorphisms of arbitrarily large degree in $\Aut_D(A_1)$ necessarily implies that $D$ is locally nilpotent. However, using Dixmier's results
on the first Weyl algebra together with explicit computations of isotropy groups, we prove in Theorem \ref{Pan_A1_LF} that every nonzero locally finite derivation $D$ of $A_1$ is locally nilpotent if and only
if its isotropy group $\Aut_D(A_1)$ contains automorphisms of arbitrarily large degree.

These results fit naturally into the classification of Ore extensions of $\Bbbk[x]$. By results of Awami, Van den Bergh and Van Oystaeyen \cite[Section 2.1]{AwamiVanDenBerghVanOystaeyen} and Alev and Dumas
\cite[Proposition 3.2]{AlevDumas}, every Ore extension
\[
S=\Bbbk[x][y;\sigma,\delta],
\]
with $\sigma\in\Aut(\Bbbk[x])$ and $\delta$ a $\sigma$-derivation, is isomorphic to a polynomial algebra, a quantum plane, a first quantum Weyl algebra, or a differential Ore extension. The remaining quantum cases are discussed in Section \ref{section_quantum}. By collecting well-known results from the literature concerning their automorphism groups and locally nilpotent derivations, we make explicit how the same Pan-type
equivalence behaves for the quantum plane and the first quantum Weyl algebra.

Beyond the class of Ore extensions, we also consider the free associative algebra $F=\Bbbk\langle x,y\rangle$, where the relation between local nilpotence
and the isotropy group exhibits a different behavior. The implication from local nilpotence to unbounded degree
of the isotropy group still holds, as a consequence of the
triangularization of locally nilpotent derivations of $F$. The converse, however, fails: we exhibit an inner derivation which is not locally nilpotent but whose isotropy group contains automorphisms of arbitrarily
large degree.

Abelianization provides a conceptual explanation for this failure. By the classical results of Czerniakiewicz and Makar-Limanov (see \cite{Czerniakiewicz1971,MakarLimanov}), automorphisms of $F$ are completely detected by their images in $\Bbbk[x,y]$. Derivations behave differently: the induced map $\pi_\ast:\der(F)\rightarrow\der(\Bbbk[x,y])$ has a nontrivial kernel, so this map is not injective. We show in Corollary \ref{kernel_lnd_free} that its kernel contains no nonzero locally nilpotent derivations:
\[
\ker(\pi_\ast)\cap\LND(F)=\{0\}.
\]

Although automorphisms of $F$ are completely determined by their abelianizations, local nilpotence of the induced derivation on the commutative quotient does not imply local nilpotence of the original derivation. Our inner counterexample in Proposition \ref{free_counterexample}
has zero abelianization. Furthermore, Proposition \ref{prop:free-nonzero-abelianization} shows
that the converse to Pan's criterion can fail even when the induced derivation is nonzero and locally nilpotent.

\section{The first Weyl algebra}\label{section_A1}

Let $A_1=\Bbbk\langle p,q\mid pq-qp=1\rangle$ be the first Weyl algebra: its center is $\operatorname{Z}(A_1)=\Bbbk$ \cite[p. 210]{Dixmier}. Every element of $A_1$ has a unique PBW expression $u=\sum_{i,j\geq0}c_{ij}p^iq^j$ and we define $
\deg(u)=\max\{i+j\mid c_{ij}\neq0\}$, and, for $\rho\in\Aut(A_1)$,
\[
\deg(\rho)=\max\{\deg(\rho(p)),\deg(\rho(q))\}.
\]

It is well known that every derivation of $A_1$ is inner; see \cite[p. 210]{Dixmier}. Then, every $D\in\der(A_1)$ has the form $D=\ad_w:=[w,\_]$, for some $w\in A_1$.

The following simple consequence will be useful throughout this section.

\begin{lemma}\label{lemma_inner_A1}
Let $w\in A_1$. Then
\[
\Aut_{\ad_w}(A_1)
=
\{\rho\in\Aut(A_1)\mid \rho(w)-w\in\Bbbk\}.
\]
\end{lemma}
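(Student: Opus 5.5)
The plan is to compute how conjugation by an automorphism acts on an inner derivation, and then use that the center of $A_1$ is $\Bbbk$. Fix $\rho\in\Aut(A_1)$ and $a\in A_1$. Since $\rho$ is an algebra automorphism,
\[
\rho\,\ad_w\,\rho^{-1}(a)=\rho\bigl(w\rho^{-1}(a)-\rho^{-1}(a)w\bigr)=\rho(w)a-a\rho(w)=\ad_{\rho(w)}(a).
\]
Hence $\rho\,\ad_w\,\rho^{-1}=\ad_{\rho(w)}$. The condition $\rho\,\ad_w=\ad_w\,\rho$ is therefore equivalent to $\ad_{\rho(w)}=\ad_w$, that is, to $\ad_{\rho(w)-w}=0$.

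Next, $\ad_u=0$ means exactly that $u$ commutes with every element of $A_1$, i.e.\ $u\in\Z(A_1)$. By the fact recalled above from \cite[p. 210]{Dixmier}, $\Z(A_1)=\Bbbk$. Thus $\rho\in\Aut_{\ad_w}(A_1)$ if and only if $\rho(w)-w\in\Bbbk$, which gives the claimed equality of sets. Each step is an equivalence, so both inclusions follow simultaneously.

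No step is a real obstacle. The only point needing care is the conjugation identity. It should be verified directly, using multiplicativity of $\rho$ and the bijectivity of $\rho$, which lets us write an arbitrary element as $\rho^{-1}(a)$.
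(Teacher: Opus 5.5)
Your proposal is correct and follows the paper's proof: conjugation gives $\rho\,\ad_w\,\rho^{-1}=\ad_{\rho(w)}$, so $\rho$ commutes with $\ad_w$ if and only if $\rho(w)-w\in\Z(A_1)=\Bbbk$. The one difference is that you verify the conjugation identity explicitly, which the paper only states.
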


\begin{proof}
For every $\rho\in\Aut(A_1)$, $\rho\ad_w\rho^{-1}=\ad_{\rho(w)}$. Then, $\rho\in\Aut_{\ad_w}(A_1)$ if and only if $\ad_{\rho(w)}=\ad_w$, which is equivalent to $\rho(w)-w\in  \Z(A_1)$. Since $ \Z(A_1)=\Bbbk$, the result follows.
\end{proof}

We recall the results of Dixmier \cite{Dixmier} that are relevant to locally finite derivations. For $w\in A_1$, let
\[
F_w:=
\left\{
u\in A_1
\ \middle|\
\dim_{\Bbbk}
\Span_{\Bbbk}\{u,\ad_w(u),\ad_w^2(u),\ldots\}<\infty
\right\},
\]
and
\[
N_w:=
\left\{
u\in A_1
\ \middle|\
\ad_w^n(u)=0
\text{ for some }n\geq1
\right\}.
\]

For $\lambda\in\Bbbk$, let
\[
E_w(\lambda):
=
\{u\in A_1\mid \ad_w(u)=\lambda u\},
\]
and denote
\[
E_w=\bigoplus_{\lambda\in\Bbbk}E_w(\lambda).
\]

Dixmier \cite[Corollary 6.6]{Dixmier} proves that
\[
F_w=N_w
\quad\text{or}\quad
F_w=E_w.
\] 

Therefore, if $\ad_w$ is locally finite, then $F_w=A_1$ and there are only two possibilities: either $N_w=A_1$, in which case $\ad_w$ is locally nilpotent, or $E_w=A_1$, in which case $\ad_w$ is locally finite but not locally nilpotent. Thus, we obtain the following statement (see \cite[Corollary 6.6 and Theorems 9.1, 9.2]{Dixmier} and the proof of \cite[Corollary 9.3]{Dixmier}).

\begin{proposition}[{\cite[Corollary 6.6, Theorems 9.1 - 9.2]{Dixmier}}]\label{prop_Dixmier_LF}
Let $0\neq D=\ad_w\in\LFDer(A_1)$. Then, exactly one of the following occurs:
\begin{itemize}
    \item[i)] $D$ is locally nilpotent, and there exists
    $\varphi\in\Aut(A_1)$ such that
    $\varphi(w)=f(p)$, for some $f(p)\in\Bbbk[p]\setminus\Bbbk$;

    \item[ii)] $D$ is locally finite but not locally nilpotent, and
    there exist $\varphi\in\Aut(A_1)$,
    $\lambda\in\Bbbk^\ast$ and $\mu\in\Bbbk$ such that $\varphi(w)=\lambda pq+\mu$.
\end{itemize}
\end{proposition}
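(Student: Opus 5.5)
The plan is to first use the dichotomy $F_w=N_w$ or $F_w=E_w$ (Dixmier, Corollary 6.6), and then invoke Dixmier's normal form theorems for each case. Since $D=\ad_w$ is locally finite, $F_w=A_1$. Hence either $N_w=A_1$, so $D$ is locally nilpotent, or $E_w=A_1$. In the second case $D$ is diagonalizable. If it were also locally nilpotent, then every eigenvalue $\lambda$ with $E_w(\lambda)\neq0$ would have to be $0$, because for $u\in E_w(\lambda)$ we have $\ad_w^n(u)=\lambda^n u$. Then $A_1=E_w(0)$ would be the centralizer of $w$. This forces $w\in\Z(A_1)=\Bbbk$ and $D=0$, a contradiction. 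So the two alternatives are mutually exclusive, and exactly one of ``locally nilpotent'' or ``locally finite but not locally nilpotent'' occurs. It remains to attach the normal forms.

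For case i), I would use Dixmier's Theorem 9.1 together with the argument of Corollary 9.3. If $\ad_w$ is locally nilpotent and nonzero, then $w\notin\Bbbk$, and $w$ is conjugate under $\Aut(A_1)$ to a polynomial in one generator. The route I would follow goes through the structure of $\Aut(A_1)$ as an amalgamated product of affine and triangular automorphisms. Local nilpotence of $\ad_w$ forces the ``leading form'' of $w$ (top-degree part in the commutative associated graded, $\operatorname{gr}A_1=\Bbbk[\bar p,\bar q]$) to be a power of a linear form. One then reduces the degree of $w$ by a triangular automorphism $p\mapsto p$, $q\mapsto q+c\,p^k$ (or its symmetric counterpart), as in Jung–van der Kulk. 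By induction on degree, this gives $\varphi(w)\in\Bbbk[p]$, and $\varphi(w)\notin\Bbbk$ since $w\notin\Bbbk$.

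For case ii), $E_w=A_1$ and $w\notin\Bbbk$. I would apply Dixmier's Theorem 9.2: if $\ad_w$ is diagonalizable, then $w$ is conjugate to $\lambda pq+\mu$. The sketch is as follows. Pick eigenvectors $u\in E_w(\alpha)$ and $v\in E_w(\beta)$ with $\alpha\neq0$ generating enough of $A_1$. Writing $p$ and $q$ as sums of eigenvectors, one shows there exist eigenvectors $a,b$ with eigenvalues $\alpha,-\alpha$ and $[a,b]=1$, so that $p\mapsto a$, $q\mapsto b$ defines an endomorphism, which is an automorphism by Dixmier's results. Under its inverse, $w$ acts as $\alpha\,\ad_{-pq}$ up to sign conventions, since $[pq,p]=-p$ and $[pq,q]=q$. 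Hence $\varphi(w)-\lambda pq\in\Z(A_1)=\Bbbk$ with $\lambda=\pm\alpha\neq0$.

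The main obstacle is the normal-form step in each case, especially ii). There, producing a generating pair of eigenvectors with $[a,b]=1$, and verifying that the resulting endomorphism is surjective, is genuinely Dixmier's deep input. I would cite \cite[Theorems 9.1, 9.2]{Dixmier} for these steps rather than reprove them, and only write out the exclusivity argument and the bookkeeping for $\lambda\neq0$ and $f\notin\Bbbk$ in detail.
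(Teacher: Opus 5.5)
Your proposal is correct and is essentially the paper's proof: the Corollary 6.6 dichotomy $F_w=N_w$ or $F_w=E_w$, followed by citing Theorems 9.1 and 9.2 for the normal forms (the paper attaches the Corollary 9.3 reduction to case ii)). Your argument that $E_w=A_1$ together with local nilpotence gives $A_1=E_w(0)=C_{A_1}(w)$, hence $w\in\Bbbk$ and $D=0$, spells out the paper's one-line remark that $D\neq0$ rules out local nilpotence in the second case. The heuristic sketches of Dixmier's theorems are not needed, and surjectivity of $p\mapsto a$, $q\mapsto b$ in your case ii) sketch is not automatic for endomorphisms of $A_1$, so citing the theorems, as you ultimately propose, is the right choice.
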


\begin{proof}
Since $D=\ad_w$ is locally finite, $F_w=A_1$. By
\cite[Cor. 6.6]{Dixmier}, $F_w=N_w$
or $F_w=E_w$.

If $F_w=N_w=A_1$, then $\ad_w$ is locally nilpotent. By
\cite[Thm. 9.1]{Dixmier}, there exists $\varphi\in\Aut(A_1)$ such that $\varphi(w)\in\Bbbk[p]$.

If $F_w=E_w=A_1$, then $D$ is locally finite and, since $D\neq0$, it is not locally nilpotent. By \cite[Thm. 9.2]{Dixmier}, together with the reduction used in the proof of \cite[Cor. 9.3]{Dixmier} over an algebraically closed field, one may assume $\varphi(w)=\lambda pq+\mu$, for some $\lambda\in\Bbbk^\ast$ and $\mu\in\Bbbk$.
\end{proof}

Let $f(p)\in\Bbbk[p]\setminus\Bbbk$ and consider $D=\ad_{f(p)} \in \der(A_1)$. The following lemma determines the centralizer of $f(p)$ in $A_1$.

\begin{lemma}\label{centralizer_fp}
If $f(p)\in\Bbbk[p]\setminus\Bbbk$, then $C_{A_1}(f(p))=\Bbbk[p]$.
\end{lemma}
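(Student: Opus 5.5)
The inclusion $\Bbbk[p]\subseteq C_{A_1}(f(p))$ is immediate, since $\Bbbk[p]$ is commutative. For the reverse inclusion, I will compute $\ad_{f(p)}$ explicitly in the PBW basis. The relation $pq-qp=1$ means that $[p,\_]$ acts on $\Bbbk[q]$ as $\partial_q$. More generally, I will check that $[f(p),q]=f'(p)$, and that $[f(p),\_]$ is a derivation commuting with left multiplication by elements of $\Bbbk[p]$. Writing $u=\sum_{j=0}^m a_j(p)q^j$ with $a_j\in\Bbbk[p]$ and $a_m\neq0$, this gives
\[
[f(p),u]=\sum_{j}a_j(p)\,[f(p),q^j],
\qquad
[f(p),q^j]=\sum_{k=0}^{j-1}q^k f'(p)q^{j-1-k}.
\]
The second formula follows from the Leibniz rule.

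The key step is a leading-term computation in $q$. I will filter $A_1$ by degree in $q$. Moving powers of $p$ past $q$ only produces terms of lower $q$-degree, because $q\,g(p)=g(p)q-g'(p)$. Hence
\[
[f(p),q^j]=j\,f'(p)\,q^{j-1}+(\text{terms of $q$-degree}<j-1).
\]
It follows that if $m\geq1$, the coefficient of $q^{m-1}$ in $[f(p),u]$ is $m\,a_m(p)f'(p)$ plus contributions from $a_{m-1}$. The term $a_{m-1}(p)[f(p),q^{m-1}]$ has $q$-degree at most $m-2$, so it contributes nothing there. This coefficient is therefore exactly $m\,a_m(p)f'(p)$. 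Since $\operatorname{char}\Bbbk=0$, $f\notin\Bbbk$ (so $f'\neq0$), and $\Bbbk[p]$ is a domain, this coefficient is nonzero. Hence $[f(p),u]\neq0$ whenever $m\geq1$, and any $u$ centralizing $f(p)$ must lie in $\Bbbk[p]$.

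An alternative route is to use the faithful representation of $A_1$ on $\Bbbk[q]$ by $p\mapsto\partial_q$, $q\mapsto q\cdot$, or the symmetric one on $\Bbbk[p]$. There the commutator with $f(p)$ becomes a differential-operator computation, and one could argue via the associated graded algebra: the symbol Poisson bracket $\{f(p),\sigma(u)\}=f'(p)\,\partial_q\sigma(u)$ vanishes only if $\sigma(u)\in\Bbbk[p]$, after which one inducts on degree. I expect the only delicate point is bookkeeping the lower-order terms in $[f(p),q^j]$ to make sure they cannot cancel the leading coefficient. The $q$-degree filtration argument above handles this cleanly, so I would present that version.
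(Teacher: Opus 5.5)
Your proof is correct and follows the same route as the paper: write $u=\sum_j a_j(p)q^j$ and show that, when $m\geq1$, the coefficient of $q^{m-1}$ in $[f(p),u]$ is $m\,a_m(p)f'(p)\neq0$. Your $q$-degree bookkeeping, using $q\,g(p)=g(p)q-g'(p)$ and the fact that $a_{m-1}(p)[f(p),q^{m-1}]$ has $q$-degree at most $m-2$, spells out the step the paper asserts without justification.
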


\begin{proof}
The inclusion $\Bbbk[p]\subseteq C_{A_1}(f(p))$ is clear. Conversely, let
\[
u=\sum_{j=0}^{m}u_j(p)q^j,
\quad
u_m(p)\neq0.
\]
Suppose that $m\geq1$. Since $[p,q]=1$, the coefficient of
$q^{m-1}$ in $[f(p),u]$ is $m u_m(p)f'(p)$. Since $\operatorname{char}(\Bbbk)=0$ and $f$ is nonconstant,
$f'(p)\neq0$. Then, $[f(p),u]\neq0$, a contradiction. Thus, $m=0$ and $u\in\Bbbk[p]$.
\end{proof}

\begin{proposition}\label{isotropy_fp}
Let $D=\ad_{f(p)}$, where $f(p)\in\Bbbk[p]\setminus\Bbbk$. Then
\[
\Aut_D(A_1)
=
\left\{
\rho_{a,b,g}\in\Aut(A_1)
\ \middle|\
f(ap+b)-f(p)\in\Bbbk
\right\},
\]
where $\rho_{a,b,g}(p)=ap+b$ and $\rho_{a,b,g}(q)=a^{-1}q+g(p)$, with $a\in\Bbbk^\ast$, $b\in\Bbbk$ and $g(p)\in\Bbbk[p]$. In particular, $\Aut_D(A_1)$ contains automorphisms of arbitrarily large degree.
\end{proposition}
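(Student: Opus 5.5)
By Lemma \ref{lemma_inner_A1}, $\rho\in\Aut_D(A_1)$ if and only if $\rho(f(p))-f(p)\in\Bbbk$. The plan is to prove the two inclusions separately.

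For "$\supseteq$": given $a\in\Bbbk^\ast$, $b\in\Bbbk$ and $g\in\Bbbk[p]$, I first check that $\rho_{a,b,g}$ is an automorphism. It preserves the relation, since
\[
[ap+b,\,a^{-1}q+g(p)]=[p,q]=1,
\]
because $p$ and $g(p)$ commute. Its inverse is of the same form: $p\mapsto a^{-1}(p-b)$ and $q\mapsto aq-ag(a^{-1}(p-b))$. Then $\rho_{a,b,g}(f(p))=f(ap+b)$, so the isotropy condition holds exactly when $f(ap+b)-f(p)\in\Bbbk$.

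For "$\subseteq$": let $\rho\in\Aut_D(A_1)$ and set $c=\rho(f(p))-f(p)\in\Bbbk$.
\begin{enumerate}
\item Since $c$ is central, $\rho(f(p))=f(p)+c$ commutes with $\rho(p)$. The centralizer of $f(p)+c$ equals that of $f(p)$, which is $\Bbbk[p]$ by Lemma \ref{centralizer_fp}. Hence $\rho(p)=r(p)\in\Bbbk[p]$.
\item Then $f(r(p))=f(p)+c$. Comparing degrees in $p$ gives $\deg f\cdot\deg r=\deg f$, so $\deg r=1$ and $r(p)=ap+b$ with $a\neq0$.
\item Write $\rho(q)=\sum_j u_j(p)q^j$. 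From $[ap+b,\rho(q)]=1$ we get $a[p,\rho(q)]=1$. Since $[p,\,\cdot\,]$ acts as $\partial_q$ on PBW expressions with $p$ on the left, we obtain $\sum_j j\,u_j(p)q^{j-1}=a^{-1}$. Therefore $u_1=a^{-1}$ and $u_j=0$ for $j\geq2$, so $\rho(q)=a^{-1}q+g(p)$ with $g(p)=u_0(p)$.
\end{enumerate}
This gives the equality of sets.

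For unbounded degree, take $a=1$, $b=0$ and $g(p)=p^d$. Then $f(ap+b)-f(p)=0$, so $\rho_{1,0,p^d}\in\Aut_D(A_1)$, and its degree is $\max\{1,d\}$, which is unbounded in $d$.

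The main point needing care is step 3, the identification of $\rho(q)$. This is the reason for using the normal form $\sum_j u_j(p)q^j$, in which commutation with $p$ acts as differentiation in $q$.
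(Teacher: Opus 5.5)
Your proposal is correct and follows essentially the same route as the paper: Lemma \ref{lemma_inner_A1} and the centralizer Lemma \ref{centralizer_fp} force $\rho(p)\in\Bbbk[p]$, and then the Weyl relation pins down $\rho(q)$. The variations are minor. You get $\deg r=1$ from a degree count in $f(r(p))=f(p)+c$, whereas the paper uses $\rho(C_{A_1}(f(p)))=C_{A_1}(f(p))$ to see that $\rho$ restricts to an automorphism of $\Bbbk[p]$. You also compute $[p,\cdot\,]$ as $\partial_q$ directly instead of citing $C_{A_1}(p)=\Bbbk[p]$.
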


\begin{proof}
Let $\rho\in\Aut_D(A_1)$. By Lemma \ref{lemma_inner_A1},
$\rho(f(p))-f(p)\in\Bbbk$. Then, $\rho(f(p))=f(p)+c$,
for some $c\in\Bbbk$. Since adding a scalar does not change the
centralizer,
\[
\rho(C_{A_1}(f(p)))
=
C_{A_1}(\rho(f(p)))
=
C_{A_1}(f(p)).
\]

By Lemma \ref{centralizer_fp}, $\rho(\Bbbk[p])=\Bbbk[p]$. Thus, $\rho$ restricts to an automorphism of the polynomial algebra $\Bbbk[p]$, and consequently $\rho(p)=ap+b$, for some $a\in\Bbbk^\ast$ and $b\in\Bbbk$. Using the Weyl relation and $\rho(p)=ap+b$, we obtain
\[
1=[\rho(p),\rho(q)]
=[ap+b,\rho(q)]
=a[p,\rho(q)].
\]

Then, $[p,\rho(q)]=a^{-1}=[p,a^{-1}q]$ and, therefore, $[p,\rho(q)-a^{-1}q]=0$. Since $C_{A_1}(p)=\Bbbk[p]$, there exists $g(p)\in\Bbbk[p]$ such that $\rho(q)=a^{-1}q+g(p)$. Finally, the condition of Lemma \ref{lemma_inner_A1} becomes $f(ap+b)-f(p)\in\Bbbk$.

Conversely, every map of this form is an automorphism of $A_1$ and satisfies $\rho(f(p))-f(p)\in\Bbbk$, then belongs to $\Aut_D(A_1)$. Additionally, let $a=1$ and $b=0$, we obtain automorphisms $p\mapsto p$, $q\mapsto q+g(p)$, with $g(p)\in\Bbbk[p]$ arbitrary: their degrees are unbounded.
\end{proof}

Let $H:=pq$. Then, $[H,p]=-p$ and $[H,q]=q$. More generally,
\[
[H,p^iq^j]=(j-i)p^iq^j.
\]

Thus, every PBW monomial is an eigenvector of $\ad_H$, and then $\ad_H$ is locally finite.

\begin{lemma}\label{eigenspaces_H}
Let $H=pq$. Then,
\[
E_H(-1)=p\Bbbk[H]
\quad\text{and}\quad
E_H(1)=\Bbbk[H]q.
\]
\end{lemma}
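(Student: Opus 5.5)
The plan is to use the grading of $A_1$ by the eigenvalues of $\ad_H$. The formula $[H,p^iq^j]=(j-i)p^iq^j$ shows that
\[
E_H(\lambda)=\Span_{\Bbbk}\{p^iq^j\mid j-i=\lambda\},
\]
with $A_1=\bigoplus_{n\in\mathbb{Z}}E_H(n)$. Hence $E_H(-1)$ is spanned by the monomials $p^{i+1}q^{i}$ with $i\geq0$, and $E_H(1)$ by the monomials $p^iq^{i+1}$ with $i\geq0$. It therefore suffices to show
\[
\Span_{\Bbbk}\{p^{i+1}q^i\mid i\geq0\}=p\Bbbk[H]
\quad\text{and}\quad
\Span_{\Bbbk}\{p^iq^{i+1}\mid i\geq0\}=\Bbbk[H]q.
\]

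For the inclusions $\supseteq$, note that $[H,p]=-p$ and $[H,H]=0$. Since $\ad_H$ is a derivation, $p\Bbbk[H]\subseteq E_H(-1)$ and $\Bbbk[H]q\subseteq E_H(1)$; this also follows from the grading because $H\in E_H(0)$. For the reverse inclusions I would prove by induction on $i$ that
\[
p^iq^i\in\Bbbk[H]\quad\text{with leading term } H^i,
\]
that is, $p^iq^i=H^i+(\text{lower powers of }H)$. The identity to use is
\[
p^{i+1}q^{i+1}=p\,(p^iq^i)\,q+(\text{correction}).
\]
More cleanly, $p^{i}q^{i}=\prod_{k=0}^{i-1}(H-k)$. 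This follows from
\[
p^{i+1}q^{i+1}=p\,(p^iq^i)\,q
\]
together with the commutation $qH=(H-1)q$ and $pHq=p\,q\,(H-1)$. I expect to verify $p^iq^i=\prod_{k=0}^{i-1}(H-k)$ directly by this induction using $qp=pq-1=H-1$. Write $p^{i+1}q^{i+1}=p\,(p^iq^i)\,q$ and move $q$ leftward through the polynomial in $H$ via $g(H)q=q\,g(H+1)$. This gives $p\,q\,g(H+1)=H\,g(H+1)$, which yields the product formula after reindexing. This is the only computational step and the main (mild) obstacle is getting the shift direction right.

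Then $p^{i+1}q^i=p\,(p^iq^i)\in p\Bbbk[H]$ and $p^iq^{i+1}=(p^iq^i)\,q\in\Bbbk[H]q$. Both spans are therefore contained in $p\Bbbk[H]$ and $\Bbbk[H]q$ respectively, which gives equality.
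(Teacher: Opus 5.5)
Your proposal is correct and follows the paper's route: you split $A_1$ by the $\ad_H$-eigenvalue $j-i$ of the PBW monomials, show by induction that $p^iq^i\in\Bbbk[H]$, and obtain the inclusions $p\Bbbk[H]\subseteq E_H(-1)$ and $\Bbbk[H]q\subseteq E_H(1)$ from $[H,p]=-p$, $[H,q]=q$ and $[H,H]=0$.

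There is one harmless sign slip. Your recursion $p^{i+1}q^{i+1}=H\,g(H+1)$, with $g(H)=p^iq^i$, is right, but it gives $p^iq^i=\prod_{k=0}^{i-1}(H+k)$, as in the paper, not $\prod_{k=0}^{i-1}(H-k)$. For example, $p^2q^2=pHq=pq(H+1)=H(H+1)$, since $pHq=pq(H+1)$ and not $pq(H-1)$. This does not affect the lemma, because only the membership $p^iq^i\in\Bbbk[H]$ is used.
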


\begin{proof}
Since $[H,p^iq^j]=(j-i)p^iq^j$, the eigenspace associated to $-1$ is spanned by the monomials
$p^{n+1}q^n$, $n\geq0$. Using $pH=(H+1)p$, we obtain inductively,
\[
p^nq^n=H(H+1)\cdots(H+n-1).
\]

Therefore, $p^{n+1}q^n\in p\Bbbk[H]$. The reverse inclusion follows from $[H,p]=-p$ and $[H,H]=0$. Thus, $E_H(-1)=p\Bbbk[H]$. The second claim is analogous.
\end{proof}

\begin{proposition}\label{isotropy_H}
Let $H=pq$. Then,
\[
\Aut_{\ad_H}(A_1)
=
\{\rho_a\mid
\rho_a(p)=ap,\;
\rho_a(q)=a^{-1}q,\;
a\in\Bbbk^\ast\}.
\]

In particular, $\Aut_{\ad_H}(A_1)\simeq\Bbbk^\ast$ and every automorphism in this isotropy group has degree one.
\end{proposition}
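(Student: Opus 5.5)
By Lemma \ref{lemma_inner_A1}, an automorphism $\rho$ lies in $\Aut_{\ad_H}(A_1)$ exactly when $\rho(H)=H+c$ for some $c\in\Bbbk$. I will first show that $c=0$ is forced. Then I will use the conjugation identity to pin down $\rho(p)$ and $\rho(q)$.

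Conjugating $\ad_H$ by $\rho$ gives $\ad_{\rho(H)}=\rho\,\ad_H\,\rho^{-1}$. So $\rho$ carries $E_H(\lambda)$ onto $E_{\rho(H)}(\lambda)$. Since $\ad_{H+c}=\ad_H$, this means $\rho$ permutes each eigenspace $E_H(\lambda)$, and in particular $\rho(E_H(-1))=E_H(-1)$ and $\rho(E_H(1))=E_H(1)$.

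To get $c=0$, note that $\rho$ preserves $E_H(0)$, which consists of the span of the monomials $p^nq^n$, namely $\Bbbk[H]$ by the computation in Lemma \ref{eigenspaces_H}. Thus $\rho$ restricts to an automorphism of the polynomial algebra $\Bbbk[H]$, so $\rho(H)=\alpha H+\beta$ with $\alpha\neq0$. Comparing with $\rho(H)=H+c$ gives $\alpha=1$, but $c$ is not yet determined this way. Instead:
\begin{itemize}
\item By Lemma \ref{eigenspaces_H}, $\rho(p)\in p\Bbbk[H]$ and $\rho(q)\in\Bbbk[H]q$. Write $\rho(p)=p\,a(H)$ and $\rho(q)=b(H)\,q$.
\item Then $\rho(H)=\rho(p)\rho(q)=p\,a(H)b(H)\,q$.
\item Using $qp=H-1$ and $Hq=q(H+1)$, we have $p\,s(H)\,q=s(H-1)\,pq=s(H-1)H$. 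Hence $\rho(H)=a(H-1)b(H-1)H$.
\end{itemize}
Setting this equal to $H+c$ forces $c=0$ and $a(H-1)b(H-1)=1$. Therefore $a$ and $b$ are constants with $ab=1$.

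The main point to check is the commutation rule $p\,s(H)=s(H-1)p$. Since $pH=(H+1)p$, it actually gives $p\,s(H)=s(H+1)\,p$, so the computation reads $p\,s(H)\,q=s(H+1)H$. Either way the degree comparison gives $\deg(ab)=0$, and the constant term then forces $c=0$. Hence $\rho(p)=ap$ and $\rho(q)=a^{-1}q$.

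Conversely, each $\rho_a$ preserves the relation $pq-qp=1$ and fixes $H$, so it lies in the isotropy group. The map $a\mapsto\rho_a$ is an injective group homomorphism $\Bbbk^\ast\to\Aut(A_1)$, and every $\rho_a$ visibly has degree one.
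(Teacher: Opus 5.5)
Your proof is correct and follows the paper's argument. Commutation with $\ad_H$ places $\rho(p)\in p\Bbbk[H]$ and $\rho(q)\in\Bbbk[H]q$, and then the identity $p\,s(H)\,q=s(H+1)H$ together with $\rho(H)=H+c$ forces $c=0$ and $a(H)b(H)=1$. In a final write-up, delete the abandoned $E_H(0)$ detour and the incorrect first version $p\,s(H)\,q=s(H-1)H$ (your own identity $Hq=q(H+1)$ gives $s(H)q=q\,s(H+1)$, hence $s(H+1)H$), keeping only the corrected computation.
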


\begin{proof}
Let $\rho\in\Aut_{\ad_H}(A_1)$. Since $\ad_H(\rho(p))
=\rho(\ad_H(p))=-\rho(p)$, Lemma \ref{eigenspaces_H} gives
$\rho(p)=p f(H)$, for some $f(H)\in\Bbbk[H]$. Similarly, $\rho(q)=g(H)q$, for some $g(H)\in\Bbbk[H]$.

On the other hand, Lemma \ref{lemma_inner_A1} gives
$\rho(H)=H+c$, for some $c\in\Bbbk$. Let $S(H)=f(H)g(H)$. Using $pS(H)=S(H+1)p$, we obtain
\[
\rho(H)
=\rho(p)\rho(q)=pS(H)q=S(H+1)H.
\]

Then, $S(H+1)H=H+c$. Since the left-hand side belongs to $H\Bbbk[H]$, its constant term is zero: $c=0$.
Consequently, $S(H+1)=1$. It follows that $f(H)g(H)=1$. Since the only units of $\Bbbk[H]$
are the nonzero scalars, there exists $a\in\Bbbk^\ast$ such that $f(H)=a$ and $g(H)=a^{-1}$. Thus,
\[
\rho(p)=ap,
\quad
\rho(q)=a^{-1}q.
\]

Conversely, every automorphism of this form fixes $H$ and, therefore, commutes with $\ad_H$.
\end{proof}

\begin{theorem}\label{Pan_A1_LF}
Let $0\neq D\in\LFDer(A_1)$. Then, the following conditions are equivalent:
\begin{itemize}
    \item[(i)] $D$ is locally nilpotent;
    \item[(ii)] $\Aut_D(A_1)$ contains automorphisms of arbitrarily
    large degree.
\end{itemize}
\end{theorem}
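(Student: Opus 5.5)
The plan is to reduce to the two normal forms of Proposition \ref{prop_Dixmier_LF} and transport the isotropy groups computed in Propositions \ref{isotropy_fp} and \ref{isotropy_H} back by conjugation. Write $D=\ad_w$ with $w\in A_1$, and let $\varphi\in\Aut(A_1)$ be the automorphism given by Proposition \ref{prop_Dixmier_LF}. Since $\varphi\ad_w\varphi^{-1}=\ad_{\varphi(w)}$, we have $\Aut_{\ad_{\varphi(w)}}(A_1)=\varphi\,\Aut_{\ad_w}(A_1)\,\varphi^{-1}$. The only extra ingredient is a degree comparison: for fixed $\varphi$, the degree of $\varphi\rho\varphi^{-1}$ is comparable to that of $\rho$.

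First I will check the degree estimates. For $u\in A_1$ and $\psi\in\Aut(A_1)$, expand $u$ in PBW monomials $p^iq^j$ and substitute $\psi(p),\psi(q)$. Reordering products in $A_1$ uses $pq=qp+1$, which only lowers total degree. Hence $\deg(\psi(u))\le \deg(\psi)\deg(u)$, and also $\deg(u)\le\deg(\psi^{-1})\deg(\psi(u))$, using that the degree is subadditive on products and that $\deg(a+b)\le\max$. Applying this to $\rho(p)$ and $\rho(q)$ gives
\[
\deg(\rho)\le \deg(\varphi^{-1})\,\deg(\varphi\rho\varphi^{-1})\,\deg(\varphi),
\qquad
\deg(\varphi\rho\varphi^{-1})\le \deg(\varphi)\,\deg(\rho)\,\deg(\varphi^{-1}).
\]
So, with $\varphi$ fixed, a subgroup of $\Aut(A_1)$ has unbounded degree if and only if its conjugate by $\varphi$ does. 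This is the step needing the most care, though it is routine: the point is that the filtration by total degree is multiplicative and the associated graded ring is $\Bbbk[p,q]$.

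For (i)$\Rightarrow$(ii): by Proposition \ref{prop_Dixmier_LF}(i), $\varphi(w)=f(p)$ with $f$ nonconstant. By Proposition \ref{isotropy_fp}, $\Aut_{\ad_{f(p)}}(A_1)$ has unbounded degree. By the estimate above, the conjugate group $\Aut_D(A_1)=\varphi^{-1}\Aut_{\ad_{f(p)}}(A_1)\varphi$ also has unbounded degree.

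For (ii)$\Rightarrow$(i), argue by contraposition. Suppose $D\neq0$ is locally finite but not locally nilpotent. By Proposition \ref{prop_Dixmier_LF}(ii), $\varphi(w)=\lambda H+\mu$ with $\lambda\neq0$, so $\ad_{\varphi(w)}=\lambda\ad_H$ and therefore $\Aut_{\ad_{\varphi(w)}}(A_1)=\Aut_{\ad_H}(A_1)$. By Proposition \ref{isotropy_H}, every element of this group has degree one. Then every $\rho\in\Aut_D(A_1)$ satisfies $\deg(\rho)\le\deg(\varphi)\deg(\varphi^{-1})$, which is bounded, so (ii) fails. Since Proposition \ref{prop_Dixmier_LF} says exactly one of its two cases occurs, this completes the equivalence.
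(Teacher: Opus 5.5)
Your proposal is correct and follows the paper's proof: reduce to the normal forms $f(p)$ and $\lambda H+\mu$ via Proposition \ref{prop_Dixmier_LF}, then use Propositions \ref{isotropy_fp} and \ref{isotropy_H}. The one addition is the bound $\deg(\varphi\rho\varphi^{-1})\le\deg(\varphi)\deg(\rho)\deg(\varphi^{-1})$, which justifies the paper's unproved remark that conjugation by a fixed automorphism preserves bounded versus unbounded degree.
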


\begin{proof}
Let $D=\ad_w$, with $w \in A_1$. Suppose first that $D$ is locally nilpotent. By Proposition \ref{prop_Dixmier_LF}, after conjugation by a fixed automorphism of $A_1$, we may assume $w=f(p)$,
for some $f(p)\in\Bbbk[p]\setminus\Bbbk$. By Proposition \ref{isotropy_fp}, the isotropy contains all automorphisms
\[
p\longmapsto p,
\quad
q\longmapsto q+g(p),
\]
and consequently has unbounded degree.

Conversely, suppose that $D$ is locally finite but not locally nilpotent. By Proposition \ref{prop_Dixmier_LF}, after conjugation and multiplication by a nonzero scalar, we may assume $D=\ad_H$, where $H=pq$. By Proposition \ref{isotropy_H},
\[
\Aut_D(A_1)
=
\{p\mapsto ap,\ q\mapsto a^{-1}q\mid a\in\Bbbk^\ast\},
\]
and hence its degree is bounded by one. Since conjugation by a fixed automorphism preserves the distinction between bounded and unbounded degree, the result follows.
\end{proof}

\begin{problem}\label{problem_A1}
Let $w\in A_1\setminus\Bbbk$. If
$\{\rho\in\Aut(A_1)\mid \rho(w)-w\in\Bbbk\}$ has unbounded degree, must $\ad_w$ be locally nilpotent? Theorem \ref{Pan_A1_LF} gives an affirmative answer whenever
$\ad_w$ is locally finite.
\end{problem}

\section{Differential Ore extensions $A_h$}\label{section_Ah}

Throughout this section, we consider $A_h=\Bbbk[x][t;h(x)\partial_x]$, with $h\in\Bbbk[x]\setminus\Bbbk$. Equivalently, $A_h$ is generated by $x$ and $t$ subject to $tx=xt+h(x)$. We write $N=\deg(h)\geq1$. For $g(x)\in\Bbbk[x]$, let $D_{g(x)}$ be the derivation of $A_h$ defined by
\[
D_{g(x)}(x)=0,
\quad
D_{g(x)}(t)=g(x).
\]

We recall the following result:

\begin{proposition}[{\cite[Prop. 2]{ISF2021}}]\label{prop_LND_Ah}
Let $h\in\Bbbk[x]\setminus\Bbbk$. Then
\[
\LND(A_h)
=
\{D_{g(x)}\mid g(x)\in\Bbbk[x]\}.
\]
Moreover, $\ML(A_h)=\Bbbk[x]$.
\end{proposition}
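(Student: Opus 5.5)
Since $\LND(A_h)$ is described explicitly, I would prove the two inclusions separately and then deduce the statement about the Makar-Limanov invariant. Throughout I use the $t$-degree filtration: every element of $A_h$ is uniquely $\sum_i a_i(x)t^i$, and since $h\neq0$ the algebra is a domain with $\degt(uv)=\degt u+\degt v$.

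\textbf{The maps $D_{g(x)}$ are locally nilpotent.} First I check that $D_{g(x)}$ is a well-defined derivation. Applying a derivation with $x\mapsto0$ and $t\mapsto g$ to the relation $tx-xt-h(x)$ gives $gx-xg-0=0$, so it extends to $A_h$. For local nilpotence, $D_{g(x)}$ kills $\Bbbk[x]$ and satisfies $D(t)=g\in\ker D$. A derivation that is nilpotent on a set of generators is locally nilpotent, by Leibniz. Explicitly, $D(a(x)t^i)$ has $t$-degree at most $i-1$, so $D^{i+1}$ kills $a(x)t^i$.

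\textbf{Every locally nilpotent derivation has this form.} Let $D\in\LND(A_h)$ be nonzero.

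\emph{Step 1: reduce to $\ker D$.} I would first show that $D(x)=0$; then $D(t)=g(x)$ follows as below. Take the degree function $\nu=\degt\circ\,$ and the standard fact that, for a locally nilpotent derivation of a domain, $\ker D$ is factorially closed. Then $D$ is determined by its degree function $\deg_D(u)=\max\{n: D^n u\neq0\}$, which satisfies $\deg_D(uv)=\deg_D u+\deg_D v$ (the usual Leibniz argument, valid in noncommutative domains via the leading coefficient $D^m u\,D^n v\neq0$).

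\emph{Step 2: show $\deg_D(x)=0$.} Apply $\deg_D$ to both sides of $tx=xt+h(x)$. Since $\deg_D$ is additive on products, $\deg_D(tx)=\deg_D(xt)=\deg_D t+\deg_D x=:m$. Also $\deg_D(h(x))=N\deg_D(x)$ if the top coefficient survives. In addition, $[t,x]$ has $D$-degree at most $m-1$: the top parts $D^a t\,D^b x$ and $D^b x\,D^a t$ lie in $\ker D$, and there I expect them to commute. This gives $N\deg_D x\le \deg_D t+\deg_D x-1$, which is not yet a contradiction. This is the main obstacle: the relation alone does not force $\deg_D x=0$.

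I would instead use the cited result \cite[Prop. 2]{ISF2021} directly, since it is stated as a recalled result. If a self-contained argument were needed, I would try the following. Pass to the associated graded for the $t$-filtration, where $\mathrm{gr}A_h$ is $\Bbbk[x,t]$, or to a homogenization in which $h$ contributes its leading term. A locally nilpotent $D$ induces a nonzero locally nilpotent derivation there. Using that $\Bbbk[x]$ is the centralizer-like subalgebra $\{u:[u,x]=0\}$ and that $\ker D$ is a commutative subalgebra of transcendence degree one, I would show $\ker D=\Bbbk[x]$.

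\emph{Step 3: determine $D(t)$.} Once $D(x)=0$, apply $D$ to $tx-xt=h$ to get $[D(t),x]=0$. Writing $D(t)=\sum b_i(x)t^i$, the relation $[t^i,x]=i h t^{i-1}+\dots$ forces $b_i=0$ for $i\ge1$, so $D(t)=g(x)$ and $D=D_{g(x)}$.

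\textbf{The Makar-Limanov invariant.} $\ML(A_h)=\bigcap_g\ker D_g$. The kernel of $D_g$ for $g\neq0$ is $\Bbbk[x]$: by the leading-term computation $D_g(a(x)t^i)=i\,a g\,t^{i-1}+\dots$, only $i=0$ survives. Hence $\ML(A_h)=\Bbbk[x]$.
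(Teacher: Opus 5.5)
Your proof covers only the easy half of the statement. You show that each $D_{g(x)}$ is a well-defined locally nilpotent derivation, that $\ker D_{g(x)}=\Bbbk[x]$ for $g\neq0$ (so $\ML(A_h)\subseteq\Bbbk[x]$), and that $D(x)=0$ forces $D(t)\in C_{A_h}(x)=\Bbbk[x]$.

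\textbf{The gap.} The crux, that every $D\in\LND(A_h)$ satisfies $D(x)=0$, is missing.
\begin{itemize}
\item As you note, the $\deg_D$ count on $tx=xt+h(x)$ only gives $N\deg_D(x)\le\deg_D(t)+\deg_D(x)-1$. Even that inequality rests on the unproved claim that elements of $\ker D$ commute.
\item Falling back on \cite[Prop.~2]{ISF2021} means citing the very statement you are asked to prove. The paper itself only imports this result, so there is no argument there to lean on.
\item Your self-contained alternative cannot work as described. Every ingredient it lists also holds in the first Weyl algebra ($h\in\Bbbk^\ast$): $\mathrm{gr}\,A_h=\Bbbk[x,t]$, $C_{A_h}(x)=\Bbbk[x]$, and a kernel that is commutative of transcendence degree one. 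Yet there $\ad_t$ is locally nilpotent with $\ad_t(x)=h\neq0$ and $\ker\ad_t=\Bbbk[t]$. So the hypothesis $\deg h\geq1$ must enter essentially, and your sketch never uses it.
\item Passing to $\mathrm{gr}\,A_h$ discards the relation $tx-xt=h(x)$ entirely. Homogeneous locally nilpotent derivations of $\Bbbk[x,t]$ such as $t\,\partial_x$ have kernel $\Bbbk[t]$, so the leading part of $D$ cannot single out $\Bbbk[x]$.
\end{itemize}
Consequently the inclusion $\Bbbk[x]\subseteq\ML(A_h)$ is also unproved.

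\textbf{A way to close the gap.} You can use the exponential argument the paper applies to the quantum plane, together with the automorphism group of $A_h$.
\begin{itemize}
\item If $D\in\LND(A_h)$, then $\exp(sD)\in\Aut(A_h)$ for every $s\in\Bbbk$. By Theorem~\ref{Aut_Ah_general} it sends $x$ to some $a_sx+b_s$.
\item Since $\exp(sD)(x)=\sum_{n=0}^{M}\frac{s^n}{n!}D^n(x)$ is polynomial in $s$, evaluate at $M+1$ distinct values of $s$ and invert a Vandermonde matrix. This gives $D(x)\in\Span_{\Bbbk}\{1,x\}$, say $D(x)=\alpha x+\beta$.
\item Local nilpotence forces $\alpha=0$, since otherwise $D^n(x)=\alpha^{n-1}(\alpha x+\beta)\neq0$ for all $n$.
\item Applying $D$ to $tx-xt=h(x)$ then gives $[D(t),x]=\beta h'(x)\in\Bbbk[x]$.
\item The leading term $[t^i,x]=ih(x)t^{i-1}+\cdots$ from Lemma~\ref{ti_x_Ah} forces $D(t)=b(x)t+c(x)$ with $b(x)h(x)=\beta h'(x)$.
\item Because $\deg h'=N-1<N$, this yields $b=0$, hence $\beta h'=0$. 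Since $N\geq1$ we have $h'\neq0$, so $\beta=0$. This is exactly where $h\notin\Bbbk$ enters.
\end{itemize}
Hence $D=D_{c(x)}$, and your computation of $\ker D_{g(x)}$ then gives $\ML(A_h)=\Bbbk[x]$.
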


The automorphism group was described by G. Benkart, S. A. Lopes and M. Ondrus \cite{BLO2015}.

\begin{theorem}[{\cite[Thm. 8.3]{BLO2015}}]\label{Aut_Ah_general} Let $\deg(h)=N\geq1$ and
\[
\mathbb P_h
=
\{(a,b)\in\Bbbk^\ast\times\Bbbk
\mid
h(ax+b)=a^Nh(x)\}.
\]
Then every $\rho\in\Aut(A_h)$ can be written as $\rho=\sigma_{r(x)}\circ\tau_{a,b}$, where $(a,b)\in\mathbb P_h$, $r(x)\in\Bbbk[x]$, 
\[
\sigma_{r(x)}(x)=x,
\quad
\sigma_{r(x)}(t)=t+r(x),
\]
and
\[
\tau_{a,b}(x)=ax+b,
\quad
\tau_{a,b}(t)=a^{N-1}t.
\]
\end{theorem}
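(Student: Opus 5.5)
The plan is to first pin down $\rho(x)$ using the Makar-Limanov invariant from Proposition \ref{prop_LND_Ah}, then determine $\rho(t)$ by a degree argument in $t$ combined with the defining relation. Let $\rho\in\Aut(A_h)$.

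\textbf{Step 1: $\rho(x)=ax+b$.} For any $D\in\LND(A_h)$, the conjugate $\rho D\rho^{-1}$ is again locally nilpotent, and $\ker(\rho D\rho^{-1})=\rho(\ker D)$. Hence $\rho$ permutes the kernels of locally nilpotent derivations, so $\rho(\ML(A_h))=\ML(A_h)$. By Proposition \ref{prop_LND_Ah}, $\ML(A_h)=\Bbbk[x]$. Therefore $\rho$ restricts to an automorphism of $\Bbbk[x]$, which forces $\rho(x)=ax+b$ with $a\in\Bbbk^\ast$ and $b\in\Bbbk$.

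\textbf{Step 2: $\rho(t)$ has $t$-degree one.} View $A_h$ as a free left $\Bbbk[x]$-module with basis $\{t^j\}_{j\ge0}$, and use the filtration by $t$-degree. The associated graded algebra is the commutative domain $\Bbbk[x][t]$. Consequently the leading $t$-coefficient of a product is the product of the leading coefficients, and $t$-degrees add.

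Suppose $\rho(t)$ has $t$-degree $m$. Then every element of $\rho(A_h)=\sum_j\Bbbk[x]\rho(t)^j$ has $t$-degree a multiple of $m$. If $m=0$, then $\rho(A_h)\subseteq\Bbbk[x]$, contradicting surjectivity. So for $t$ to lie in the image we need $m=1$. Moreover, comparing leading coefficients, the coefficient of $t$ in a preimage must multiply against the leading coefficient $c(x)$ to give $1$. Thus $c(x)$ is a unit of $\Bbbk[x]$, hence a nonzero constant. Writing $\rho(t)=ct+r(x)$ with $c\in\Bbbk^\ast$ is the step I expect to need the most care, since it relies on this associated-graded bookkeeping.

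\textbf{Step 3: the relation forces $c=a^{N-1}$ and $(a,b)\in\mathbb P_h$.} Apply $\rho$ to $tx-xt=h(x)$. We have
\[
[\rho(t),\rho(x)]=[ct+r(x),ax+b]=ac\,h(x),
\]
while $\rho(h(x))=h(ax+b)$. Hence $h(ax+b)=ac\,h(x)$. Comparing leading coefficients (both sides have degree $N$, with leading coefficients $\ell a^N$ and $ac\ell$ respectively, where $\ell$ is the leading coefficient of $h$) gives $c=a^{N-1}$. Then $h(ax+b)=a^Nh(x)$, so $(a,b)\in\mathbb P_h$.

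\textbf{Step 4: the factorization.} Since $\sigma_{s}\circ\tau_{a,b}$ sends $x\mapsto ax+b$ and $t\mapsto a^{N-1}(t+s(x))$, taking $s(x)=a^{1-N}r(x)$ gives $\rho=\sigma_{s}\circ\tau_{a,b}$.

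\textbf{Converse.} The maps $\sigma_s$ and $\tau_{a,b}$ with $(a,b)\in\mathbb P_h$ preserve the defining relation, so they are endomorphisms of $A_h$. They have evident inverses of the same shape (note $(a^{-1},-a^{-1}b)\in\mathbb P_h$), so they are automorphisms.
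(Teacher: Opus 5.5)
The paper does not prove this statement; it quotes it from \cite[Thm.~8.3]{BLO2015} as a black box, so there is no internal argument to compare with. Your proof is correct and self-contained relative to what the paper has already stated.

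\textbf{Checking the steps.} Step~1 is the standard Makar-Limanov argument. The map $D\mapsto\rho D\rho^{-1}$ is a bijection of $\LND(A_h)$, and $\ker(\rho D\rho^{-1})=\rho(\ker D)$, so $\rho$ stabilizes $\ML(A_h)=\Bbbk[x]$. The $t$-degree bookkeeping in Step~2 is sound: $\degt$ is additive on $A_h$ because the associated graded ring is the domain $\Bbbk[x][t]$. Hence elements of $\rho(A_h)=\sum_j\Bbbk[x]\rho(t)^j$ have $t$-degree divisible by $m$, and the unit argument for $c(x)$ is valid. Steps~3--4 correctly extract $c=a^{N-1}$, $(a,b)\in\mathbb P_h$ and $s=a^{1-N}r$.

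\textbf{What your route buys and costs.} It gives a short derivation inside the paper's own framework. The cost is that it rests on Proposition~\ref{prop_LND_Ah}. It therefore relies on the characteristic-zero classification of $\LND(A_h)$ from \cite{ISF2021}, which is harmless here. Its independence from \cite{BLO2015} also depends on how that classification was obtained.

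\textbf{A shortcut for Step~2.} The step you flagged as delicate can be shortened, and it does not need surjectivity. Once $\rho(x)=ax+b$, the defining relation gives
\[
a[\rho(t),x]=h(ax+b)\in\Bbbk[x]\setminus\{0\}.
\]
Write $\rho(t)=\sum_{i\le n}v_i(x)t^i$ with $v_n\neq0$. The case $n=0$ is excluded because the commutator is nonzero. If $n\geq1$, Lemma~\ref{ti_x_Ah} shows that $[\rho(t),x]$ has $t^{n-1}$-coefficient $n v_n(x)h(x)\neq0$, exactly as in the proof of Lemma~\ref{Dx_polynomial}. This forces $n=1$. Then $a\,c(x)h(x)=h(ax+b)$ forces $c(x)$ to be a constant by comparing degrees in $x$. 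So surjectivity of $\rho$ is genuinely needed only in Step~1.
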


We use the following normalization obtained in
\cite{BaltazarSilvaMartini}.

\begin{proposition}[{\cite[Prop. 13]{BaltazarSilvaMartini}}]\label{normalization_Ah}
Let $h(x)\in\Bbbk[x]$, with $\deg(h)=N\geq1$. There exists a monic
polynomial $h^\ast\in\Bbbk[x]$ such that $A_h\simeq A_{h^\ast}$,
the coefficient of degree $N-1$ of $h^\ast$ is zero, and
\[
\Aut(A_{h^\ast})
=
\{\sigma_{r(x)}\circ\tau_a
\mid
h^\ast(ax)=a^Nh^\ast(x)\},
\]
where $\tau_a=\tau_{a,0}$.
\end{proposition}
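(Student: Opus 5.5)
The plan is to handle the two claims separately. First I will produce $h^\ast$ by an affine substitution together with a rescaling of $t$. Then I will feed $h^\ast$ into Theorem \ref{Aut_Ah_general} and show that the translation parameter $b$ is forced to vanish.

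\emph{Step 1 (normalization).} Write $h(x)=c_Nx^N+c_{N-1}x^{N-1}+\cdots$ with $c_N\neq0$, and put $\beta=-c_{N-1}/(Nc_N)$. This uses $\operatorname{char}\Bbbk=0$. Define
\[
h^\ast(X)=c_N^{-1}h(X+\beta).
\]
This polynomial is monic of degree $N$. Its coefficient of $X^{N-1}$ is $c_N^{-1}(Nc_N\beta+c_{N-1})=0$.

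Inside $A_h$, set $X=x-\beta$ and $T=c_N^{-1}t$. Then
\[
[T,X]=c_N^{-1}[t,x]=c_N^{-1}h(x)=c_N^{-1}h(X+\beta)=h^\ast(X).
\]
So $X\mapsto X$, $T\mapsto T$ defines an algebra homomorphism $A_{h^\ast}\to A_h$. It is surjective, since $x=X+\beta$ and $t=c_NT$. It is injective, since it sends the PBW basis $\{X^iT^j\}$ of the Ore extension $A_{h^\ast}$ to the elements $(x-\beta)^i(c_N^{-1}t)^j$. These form a basis of $A_h$, because the change from $\{x^it^j\}$ is triangular with respect to the bidegree. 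Hence $A_h\simeq A_{h^\ast}$.

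\emph{Step 2 (automorphisms).} Apply Theorem \ref{Aut_Ah_general} to $h^\ast$. It says every automorphism has the form $\sigma_{r(x)}\circ\tau_{a,b}$ with $(a,b)\in\mathbb P_{h^\ast}$, so it suffices to show that $(a,b)\in\mathbb P_{h^\ast}$ forces $b=0$. Compare the coefficients of $x^{N-1}$ in $h^\ast(ax+b)=a^Nh^\ast(x)$. Since $h^\ast=x^N+0\cdot x^{N-1}+\cdots$, the left side contributes $Na^{N-1}b$ and the right side contributes $0$. As $a\neq0$ and $N\neq0$ in $\Bbbk$, we get $b=0$. Thus $\mathbb P_{h^\ast}=\{(a,0)\mid h^\ast(ax)=a^Nh^\ast(x)\}$, and the description of $\Aut(A_{h^\ast})$ follows.

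The reverse inclusion is also part of Theorem \ref{Aut_Ah_general}, but it is easy to check directly: each such $\sigma_{r}\circ\tau_a$ preserves the relation $[t,x]=h^\ast(x)$, using $a^{N-1}\cdot a^{-1}\cdot a^N$-type bookkeeping. It is invertible with inverse of the same type.

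The main point requiring care is the injectivity in Step 1, that is, that the relation $[T,X]=h^\ast(X)$ generates all relations among $X,T$. The PBW/triangularity argument above handles this, so I expect no serious obstacle; the rest is coefficient comparison.
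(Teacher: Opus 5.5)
Your proof is correct and follows the route the paper indicates; the paper gives no proof of its own, only citing \cite[Prop.~13]{BaltazarSilvaMartini} and remarking that the normalization comes from an affine change of the base variable together with a scalar rescaling. That is exactly what you do: translate $x$ by $\beta=-c_{N-1}/(Nc_N)$, rescale $t$ by $c_N^{-1}$, and then compare $x^{N-1}$-coefficients in $h^\ast(ax+b)=a^Nh^\ast(x)$ to force $b=0$ in Theorem~\ref{Aut_Ah_general}. Your direct check of the reverse inclusion is a worthwhile addition, since Theorem~\ref{Aut_Ah_general} as quoted only states one direction. That check shows each $\sigma_{r(x)}\circ\tau_a$ with $h^\ast(ax)=a^Nh^\ast(x)$ respects $[t,x]=h^\ast(x)$ and has inverse $\tau_{a^{-1}}\circ\sigma_{-r(x)}$.
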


From now on, we replace $A_h$ by this isomorphic normalized and write again $h$ for $h^\ast$. Thus, $h$ is monic and its coefficient of degree $N-1$ is zero. Every automorphism can be written as $\rho=\sigma_{r(x)}\circ\tau_a$, where
\[
\rho(x)=ax,
\quad
\rho(t)=a^{N-1}(t+r(x)),
\]
and
\[
h(ax)=a^Nh(x).
\]

Since this normalization is obtained by an affine change of the base variable together with a nonzero scalar rescaling, both local nilpotence and the distinction between bounded and unbounded degree are preserved.

\begin{proposition}\label{LND_large_Ah}
If $D\in\LND(A_h)$, then $\Aut_D(A_h)$ contains automorphisms of arbitrarily large degree.
\end{proposition}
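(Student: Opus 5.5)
By Proposition \ref{prop_LND_Ah}, every $D\in\LND(A_h)$ has the form $D=D_{g(x)}$ for some $g(x)\in\Bbbk[x]$. The plan is to write down an explicit family of automorphisms commuting with $D_{g(x)}$ and having unbounded degree. The natural candidates are the elementary automorphisms $\sigma_{r(x)}$ of Theorem \ref{Aut_Ah_general}, which fix $x$ and send $t\mapsto t+r(x)$; we will show that every $\sigma_{r(x)}$ lies in $\Aut_D(A_h)$.

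Since both $\sigma_{r}D$ and $D\sigma_{r}$ are $\sigma_r$-twisted derivations (that is, $\sigma_r D\sigma_r^{-1}$ is a derivation), it suffices to check that $\sigma_r D\sigma_r^{-1}=D$ on the generators $x$ and $t$. Equivalently, we check $\sigma_r D=D\sigma_r$ on $x$ and $t$. On $x$:
\[
\sigma_r(D(x))=0,\qquad D(\sigma_r(x))=D(x)=0 .
\]
On $t$:
\[
\sigma_r(D(t))=\sigma_r(g(x))=g(x),
\]
because $\sigma_r$ fixes $\Bbbk[x]$, while
\[
D(\sigma_r(t))=D(t)+D(r(x))=g(x)+0,
\]
because $D$ kills $\Bbbk[x]$ (this follows from $D(x)=0$ and the Leibniz rule). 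Hence $\sigma_r\in\Aut_D(A_h)$ for every $r(x)\in\Bbbk[x]$. The same argument covers $D=0$, where the isotropy group is all of $\Aut(A_h)$.

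It remains to control degrees. We use the natural degree on $A_h$ given by the PBW basis $\{x^it^j\}$, with $\deg(\rho)=\max\{\deg\rho(x),\deg\rho(t)\}$, analogous to Section \ref{section_A1}. Then
\[
\sigma_{x^n}(t)=t+x^n,
\]
which has degree $n$ for $n\geq1$ because $t$ and $x^n$ are distinct PBW monomials. Thus $\deg\sigma_{x^n}=n$, which is unbounded.

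The argument has no real obstacle. The only point needing care is that the degree is well defined through the PBW basis. The normalization of Proposition \ref{normalization_Ah} is not needed: as remarked after it, it preserves both local nilpotence and boundedness of degree, and in any case the family $\sigma_r$ exists for arbitrary $h$ by Theorem \ref{Aut_Ah_general}.
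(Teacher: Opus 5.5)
Your proposal is correct and follows essentially the same route as the paper: you invoke Proposition~\ref{prop_LND_Ah} to write $D=D_{g(x)}$, check on the generators $x,t$ that every $\sigma_{r(x)}$ commutes with $D$, and take $r(x)=x^n$ to get unbounded degree. Your extra remarks, namely why checking on generators suffices ($\sigma_r D$ and $D\sigma_r$ are both $\sigma_r$-derivations) and that $\deg\sigma_{x^n}=n$, just make explicit what the paper leaves implicit.
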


\begin{proof}
By Proposition \ref{prop_LND_Ah}, there exists $g(x)\in\Bbbk[x]$ such that $D(x)=0$ and $D(t)=g(x)$. For every $r(x)\in\Bbbk[x]$, consider
\[
\sigma_{r(x)}(x)=x,
\quad
\sigma_{r(x)}(t)=t+r(x).
\]

Then, $D(\sigma_{r(x)}(x))=0=\sigma_{r(x)}(D(x))$,
and
\[
D(\sigma_{r(x)}(t))
=
D(t+r(x))
=
g(x)
=
\sigma_{r(x)}(D(t)).
\]

Therefore, $\sigma_{r(x)}\in\Aut_D(A_h)$, for every $r(x)\in\Bbbk[x]$.
\end{proof}

We recall some well-known elementary relations in the algebra $A_h$ that will be used throughout this section. Proofs can be found, for instance, in \cite{BaltazarSilvaMartini}. Every element $u\in A_h$ can be written uniquely as $u=\sum_{i=0}^{m}u_i(x)t^i$, where $u_i(x)\in\Bbbk[x]$. If $u_m(x)\neq0$, we write $\degt(u)=m$, and we denote $\degt(0)=-\infty$.

\begin{lemma}\label{tf_Ah}
For every $f(x)\in\Bbbk[x]$, we have $tf(x)=f(x)t+f'(x)h(x)$.
\end{lemma}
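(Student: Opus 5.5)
The identity $tf(x)=f(x)t+f'(x)h(x)$ follows from the defining relation $tx=xt+h(x)$. I would prove it by showing that $[t,\cdot\,]$ acts on $\Bbbk[x]$ as the derivation $h(x)\partial_x$.

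Consider the map $\delta:A_h\to A_h$, $\delta(u)=tu-ut$. As a commutator, $\delta$ is a derivation of $A_h$, and the defining relation gives $\delta(x)=h(x)$. The goal is $\delta(f(x))=f'(x)h(x)$ for every $f\in\Bbbk[x]$. Both sides are $\Bbbk$-linear in $f$, so it suffices to treat the monomials $f=x^n$.

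I would argue by induction on $n$. For $n=0$, $\delta(1)=0=0\cdot h$. For the inductive step, the Leibniz rule gives
\[
\delta(x^{n+1})=\delta(x^n)x+x^n\delta(x).
\]
By the inductive hypothesis this equals
\[
nx^{n-1}h(x)\,x+x^nh(x)=(n+1)x^nh(x).
\]
The step uses that $h(x)$ and $x^{n-1}$ lie in the commutative subalgebra $\Bbbk[x]$, so they commute with $x$. Rewriting $\delta(f)=f'h$ as $tf=ft+f'h$ gives the claim.

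There is no real obstacle. The only point needing care is that every product occurring in the inductive step involves only elements of $\Bbbk[x]$, so commutativity may be used there. Alternatively, the identity holds by construction of the Ore extension $\Bbbk[x][t;h\partial_x]$: by definition $tf=ft+\delta(f)$ with $\delta=h\partial_x$ a derivation of $\Bbbk[x]$, and $\delta(f)=f'h$.
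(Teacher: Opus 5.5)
Your proof is correct: $\ad_t$ is a derivation of $A_h$ that agrees with $h(x)\partial_x$ on the generator $x$ of the commutative subalgebra $\Bbbk[x]$, so the two agree on all of $\Bbbk[x]$. As your closing remark notes, the identity is also simply the multiplication rule that defines the Ore extension $\Bbbk[x][t;h(x)\partial_x]$. The paper gives no argument of its own and refers to \cite{BaltazarSilvaMartini} for this standard fact, so your self-contained verification fully covers it.
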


\begin{corollary}\label{tg_Ah}
Let $g=\sum_{i=0}^{n}g_i(x)t^i\in A_h$. Then, $tg=gt+h(x)\widetilde g$, where $\widetilde g
=\sum_{i=0}^{n}g_i'(x)t^i$ and $\degt(\widetilde g)\leq\degt(g)$.
\end{corollary}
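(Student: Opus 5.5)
The proof is a direct computation from Lemma \ref{tf_Ah}, which says $tf(x)=f(x)t+f'(x)h(x)$. I will first establish, for each $i\ge 0$, the commutation rule
\[
t\,g_i(x)t^i=g_i(x)t^{i+1}+h(x)g_i'(x)t^i .
\]
This follows by applying Lemma \ref{tf_Ah} to $f=g_i$ and then multiplying the resulting identity on the right by $t^i$. Since multiplication in $A_h$ is associative, no further reordering of factors is required.

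Next I sum this rule over $i=0,\dots,n$. The left-hand side is $t\sum_i g_i(x)t^i=tg$. On the right, the first sum is $\sum_i g_i(x)t^{i+1}=\big(\sum_i g_i(x)t^i\big)t=gt$. The second sum is $\sum_i h(x)g_i'(x)t^i$. Because left multiplication by $h(x)$ is distributive, it equals $h(x)\sum_i g_i'(x)t^i=h(x)\widetilde g$. Putting these together gives $tg=gt+h(x)\widetilde g$, as claimed.

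For the degree statement, note that $\widetilde g=\sum_{i=0}^n g_i'(x)t^i$ is already written in the unique normal form $\sum u_i(x)t^i$. Its coefficients of $t^i$ vanish for $i>n$, so $\degt(\widetilde g)\le n=\degt(g)$. This also covers the degenerate cases: if all $g_i$ are constant then $\widetilde g=0$ has degree $-\infty$, and if $g=0$ both sides are trivially $-\infty$.

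None of these steps is a real obstacle. The only point needing care is to write $h(x)$ on the left of $g_i'(x)t^i$ in the rule above, so that factoring it out of the sum is legitimate.
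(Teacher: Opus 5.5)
Your proposal is correct and is the intended argument: the paper gives no separate proof, presenting this as an immediate consequence of Lemma~\ref{tf_Ah}. That is exactly your computation: right-multiply $tg_i(x)=g_i(x)t+g_i'(x)h(x)$ by $t^i$, use $g_i'(x)h(x)=h(x)g_i'(x)$ in the commutative subalgebra $\Bbbk[x]$, and sum over $i$. One small imprecision: writing $n=\degt(g)$ tacitly assumes $g_n\neq 0$, but the bound $\degt(\widetilde g)\leq\degt(g)$ holds regardless, since $g_i=0$ forces $g_i'=0$.
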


\begin{lemma}\label{ti_x_Ah}
For every $i\geq1$, there exists $u_i\in A_h$ such that
\[
[t^i,x]
=
i h(x)t^{i-1}+h(x)u_i
\]
and $\degt(u_i)<i-1$.
\end{lemma}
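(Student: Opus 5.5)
We argue by induction on $i$, using Lemma \ref{tf_Ah} (equivalently Corollary \ref{tg_Ah}) to commute $t$ past polynomials in $x$ and tracking only the leading term in $t$. The base case $i=1$ is immediate: $[t,x]=tx-xt=h(x)$, so the claim holds with $u_1=0$, whose $t$-degree is $-\infty<0$.

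For the inductive step, suppose $[t^i,x]=ih(x)t^{i-1}+h(x)u_i$ with $\degt(u_i)<i-1$. We expand
\[
[t^{i+1},x]=t[t^i,x]+[t,x]t^i=t\bigl(ih(x)t^{i-1}+h(x)u_i\bigr)+h(x)t^i .
\]
By Lemma \ref{tf_Ah}, $t\,h(x)=h(x)t+h'(x)h(x)$. Hence
\[
t\,ih(x)t^{i-1}=ih(x)t^i+ih(x)h'(x)t^{i-1}.
\]
Applying Corollary \ref{tg_Ah} to $g=h(x)u_i$ gives $t\,h(x)u_i=h(x)u_it+h(x)\widetilde{g}$ with $\degt(\widetilde g)\le \degt(h(x)u_i)=\degt(u_i)<i-1$. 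The point to check is that $\widetilde g$ is left-divisible by $h(x)$. Writing $u_i=\sum_j v_j(x)t^j$, we get $\widetilde g=\sum_j (hv_j)'t^j=h'u_i+h\sum_j v_j' t^j$, which is a left $\Bbbk[x]$-combination of terms of $t$-degree $<i-1$. Thus $h(x)\widetilde g$ has the form $h(x)w$ with $\degt(w)<i-1$. Alternatively, one can avoid this and simply write $t h(x)u_i=(h(x)t+h'(x)h(x))u_i=h(x)(tu_i+h'(x)u_i)$, where $\degt(tu_i)\le \degt(u_i)+1<i$ and $\degt(h'u_i)<i-1$. This second route is cleaner.

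Collecting terms, we obtain
\[
[t^{i+1},x]=(i+1)h(x)t^i+h(x)\bigl(ih'(x)t^{i-1}+tu_i+h'(x)u_i\bigr).
\]
Set $u_{i+1}:=ih'(x)t^{i-1}+tu_i+h'(x)u_i$. Each summand has $t$-degree at most $i-1$, since $\degt(tu_i)=\degt(u_i)+1\le i-1$ because $tv_j(x)t^j=v_jt^{j+1}+(\text{lower})$. Therefore $\degt(u_{i+1})<i$, which completes the induction.

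The only point needing care is that $\degt(tu)\le\degt(u)+1$ and that $h(x)$ can be factored on the left in every term. Both follow directly from Lemma \ref{tf_Ah}, so no real obstacle is expected.
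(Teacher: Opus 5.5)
Your induction is correct and complete. The identity $[t^{i+1},x]=t[t^i,x]+[t,x]t^i$, the rewriting $t\,h(x)u_i=h(x)\bigl(tu_i+h'(x)u_i\bigr)$ from Lemma \ref{tf_Ah}, and the bound $\degt(tu_i)\le\degt(u_i)+1\le i-1$ together give $u_{i+1}=ih'(x)t^{i-1}+tu_i+h'(x)u_i$ with $\degt(u_{i+1})<i$, as required.

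The paper does not prove this lemma; it refers to \cite{BaltazarSilvaMartini}. So there is no in-paper argument to compare against, and your proof stands as a self-contained verification.

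One small remark: the ``point to check'' in your first route is not actually an issue. In $t\,h(x)u_i=h(x)u_it+h(x)\widetilde g$, the term $h(x)\widetilde g$ is already a left multiple of $h(x)$, with $\degt(\widetilde g)\le\degt(u_i)<i-1$. So that route also goes through with no extra divisibility check.
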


\begin{lemma}\label{t_plus_r_Ah}
Let $r(x)\in\Bbbk[x]$. For every $i\geq1$, there exists $g_i\in A_h$
such that
\[
(t+r(x))^i
=
t^i+i r(x)t^{i-1}+g_i
\]
and $\degt(g_i)\leq i-2$.
\end{lemma}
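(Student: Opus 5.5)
We prove the statement by induction on $i$, expanding $(t+r(x))^{i+1}=(t+r(x))^i(t+r(x))$ and keeping track only of the two top $t$-degrees, using Lemma \ref{tf_Ah} to move powers of $x$ past $t$.

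For the base case $i=1$, take $g_1=0$. Then $(t+r(x))^1=t+r(x)t^0$, and $\degt(0)=-\infty\le -1$.

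For the inductive step, assume $(t+r)^i=t^i+i\,r\,t^{i-1}+g_i$ with $\degt(g_i)\le i-2$, where we write $r=r(x)$. Multiplying on the right by $t+r$ gives
\[
(t+r)^{i+1}=t^{i+1}+t^i r+i\,r\,t^{i-1}t+i\,r\,t^{i-1}r+g_i t+g_i r .
\]
We analyse each term separately.
\begin{itemize}
\item The term $i\,r\,t^i$ is already in normal form.
\item Since $g_i=\sum_{j\le i-2}c_j(x)t^j$, we have $g_it=\sum_j c_jt^{j+1}$, so $\degt(g_it)\le i-1$.
\item We need the fact that $\degt(t^jf(x))\le j$ for every $j$ and every $f\in\Bbbk[x]$, and that $t^jf(x)=f(x)t^j+(\text{terms of }t\text{-degree}<j)$. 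This follows by iterating Lemma \ref{tf_Ah}, which in the form of Corollary \ref{tg_Ah} states that $t\cdot(\text{element of }t\text{-degree }\le m)$ has $t$-degree $\le m+1$.
\item By that fact, $t^i r=r\,t^i+(\text{terms of }t\text{-degree}\le i-1)$.
\item Also by that fact, $\degt(t^{i-1}r)\le i-1$, so $\degt(i\,r\,t^{i-1}r)\le i-1$.
\item Writing $g_ir=\sum_j c_j\,t^j r$, each summand has $t$-degree $\le j\le i-2$.
\end{itemize}

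Collecting the leading terms gives $t^{i+1}+(i+1)\,r\,t^i$, and every remaining term has $t$-degree at most $i-1=(i+1)-2$. This completes the induction.

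The only point needing care is the auxiliary fact. Here we should note that the leading coefficient survives: by Corollary \ref{tg_Ah}, $t\,(f t^{j})=f t^{j+1}+(\ldots)$, where the extra part $h\tilde g$ has $t$-degree $\le j$. This is routine.
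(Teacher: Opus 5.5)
Your proof is correct. The auxiliary fact $t^jf(x)=f(x)t^j+(\text{terms of }t\text{-degree}<j)$ follows from Lemma~\ref{tf_Ah} and Corollary~\ref{tg_Ah} by induction on $j$, exactly as you sketch, and your bookkeeping of the six terms in $(t+r)^i(t+r)$ correctly gives $t^{i+1}+(i+1)r\,t^i$ plus terms of $\degt\le i-1$. The paper does not prove this lemma itself; it defers to \cite{BaltazarSilvaMartini}. Your induction on $i$, using the commutation relation $tf=ft+f'h$, is the standard argument that these elementary relations are meant to support.
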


We also use the following elementary description of the centralizer of $x$ in $A_h$.

\begin{lemma}\label{centralizer_x_Ah}
Let $h(x)\neq0$. Then, $C_{A_h}(x)=\Bbbk[x]$. In particular, $ \Z(A_h)=\Bbbk$.
\end{lemma}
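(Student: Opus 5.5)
The plan is to take an arbitrary $u\in A_h$ with $[x,u]=0$, write it in normal form as $u=\sum_{i=0}^{m}u_i(x)t^i$ with $u_m(x)\neq0$, and show that $m=0$. The inclusion $\Bbbk[x]\subseteq C_{A_h}(x)$ is clear, so this is the only thing to prove.

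Assume $m\geq1$ and compute $[u,x]=\sum_{i}u_i(x)[t^i,x]$, where we use that $u_i(x)$ commutes with $x$. By Lemma \ref{ti_x_Ah}, each term satisfies
\[
[t^i,x]=i\,h(x)t^{i-1}+h(x)u_i',\qquad \degt(u_i')<i-1.
\]
Hence the term with $i=m$ contributes $m\,u_m(x)h(x)t^{m-1}$ plus terms of $t$-degree less than $m-1$. Every term with $i<m$ has $t$-degree at most $i-1<m-1$.

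Since $A_h$ is a free left $\Bbbk[x]$-module on the powers $t^j$, and the factor $h(x)$ on the left of a normal-form element just multiplies its coefficients, the coefficient of $t^{m-1}$ in $[u,x]$ is exactly $m\,u_m(x)h(x)$. This is nonzero because $\Bbbk[x]$ is a domain, $\operatorname{char}\Bbbk=0$, $u_m\neq0$ and $h\neq0$. So $[u,x]\neq0$, a contradiction, and therefore $u=u_0(x)\in\Bbbk[x]$.

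For the center, note that $\Z(A_h)\subseteq C_{A_h}(x)=\Bbbk[x]$. An element $f(x)$ that commutes with $t$ satisfies $0=[t,f(x)]=f'(x)h(x)$ by Lemma \ref{tf_Ah}. Since $h\neq0$, this forces $f'=0$, so $f\in\Bbbk$. This argument only needs $h\neq0$, as the statement requires.

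The only delicate point is the bookkeeping of leading coefficients, namely that the lower-order terms cannot cancel the $t^{m-1}$ coefficient. This is handled directly by the degree bound in Lemma \ref{ti_x_Ah}.
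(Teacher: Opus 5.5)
Your proof is correct and is essentially the argument the paper has in mind. The paper states this lemma without proof, but it runs the same leading-coefficient computation (the $t^{s-1}$-coefficient of $[v,x]$ is $s\,v_s(x)h(x)$, via Lemma \ref{ti_x_Ah}) inside the proof of Lemma \ref{Dx_polynomial}, and the analogous argument for $A_1$ in Lemma \ref{centralizer_fp}; your deduction of $\Z(A_h)=\Bbbk$ from $[t,f(x)]=f'(x)h(x)$ (Lemma \ref{tf_Ah}) and $\operatorname{char}\Bbbk=0$ is also fine.
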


For $u=\sum_{i,j}c_{ij}x^it^j\in A_h$, we consider the total degree $\deg(u)=\max\{i+j\mid c_{ij}\neq0\}$ with the convention $\degt(0)=\deg(0)=-\infty$. For $\rho\in\Aut(A_h)$, we also define $\deg(\rho)
=\max\{\deg(\rho(x)),\deg(\rho(t))\}$. If $\rho=\sigma_{r(x)}\circ\tau_a$, then
\[
\rho(x)=ax,
\quad
\rho(t)=a^{N-1}(t+r(x)),
\]
and, consequently, $\deg(\rho)=\max\{1,\deg(r(x))\}$. Thus, we immediately obtain the following observation.

\begin{lemma}\label{large_degree_r}
Let $G\subseteq\Aut(A_h)$. Then, $G$ contains automorphisms of arbitrarily large degree if and only if there is a sequence
\[
\rho_n
=
\sigma_{r_n(x)}\circ\tau_{a_n}\in G
\]
such that $\deg(r_n)\longrightarrow\infty$.
\end{lemma}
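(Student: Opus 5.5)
The statement is Lemma \ref{large_degree_r}, which I read as a direct consequence of the degree formula $\deg(\rho)=\max\{1,\deg(r(x))\}$ for $\rho=\sigma_{r(x)}\circ\tau_a$. The plan is to first make sure this formula holds, using Proposition \ref{normalization_Ah}, and then compare degrees in the two directions of the equivalence.

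The first step is to check the degree formula. By Proposition \ref{normalization_Ah}, every automorphism has the form $\rho=\sigma_{r(x)}\circ\tau_a$ with $a\in\Bbbk^\ast$. Then $\rho(x)=ax$, which has degree $1$. Also $\rho(t)=a^{N-1}(t+r(x))$, and in the PBW basis $\{x^it^j\}$ this is $a^{N-1}t+a^{N-1}r(x)$, already in normal form. Its total degree is therefore $\max\{1,\deg r\}$. For $r=0$ this reads as $1$, using the convention $\deg 0=-\infty$. This gives $\deg(\rho)=\max\{1,\deg(r)\}$.

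For the direction ($\Rightarrow$), suppose $G$ contains automorphisms of arbitrarily large degree. For each $n\geq 2$, I will choose $\rho_n\in G$ with $\deg(\rho_n)\geq n$ and write it as $\sigma_{r_n(x)}\circ\tau_{a_n}$. The formula then gives $\max\{1,\deg r_n\}\geq n\geq 2$, so $\deg r_n\geq n$, and hence $\deg r_n\to\infty$.

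For the direction ($\Leftarrow$), suppose we have a sequence $\rho_n=\sigma_{r_n(x)}\circ\tau_{a_n}\in G$ with $\deg r_n\to\infty$. Then $\deg(\rho_n)\geq\deg r_n$, so the degrees of elements of $G$ are unbounded.

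There is no real obstacle here. The only point to be careful about is that the decomposition $\rho=\sigma_{r}\circ\tau_a$ is unique, so the phrase "the polynomial $r$ of $\rho$" is well defined. Uniqueness holds because $a$ is determined by $\rho(x)=ax$, and then $r$ is determined by $\rho(t)=a^{N-1}(t+r(x))$. In any case, the argument above never needs uniqueness.
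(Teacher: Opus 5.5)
Your proof is correct and takes the same approach as the paper. The paper also obtains the lemma directly from the formula $\deg(\sigma_{r(x)}\circ\tau_a)=\max\{1,\deg(r(x))\}$, which you verify from $\rho(x)=ax$ and $\rho(t)=a^{N-1}(t+r(x))$ and then apply in both directions.
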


The following two results will play a central role in the proof of the main theorem of this section.

\begin{lemma}\label{Dx_positive_t}
Let $D\in\der(A_h)$ and suppose that $\degt(D(x))\geq1$. Then, there exists $M_D\geq0$ such that, for every
$\rho=\sigma_{r(x)}\circ\tau_a\in\Aut_D(A_h)$, we have $\deg(r(x))\leq M_D$.
\end{lemma}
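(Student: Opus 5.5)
The plan is to use only the single relation $D(\rho(x))=\rho(D(x))$ and read off the coefficient of $t^{m-1}$, where $m=\degt(D(x))\geq1$. Write
\[
D(x)=\sum_{i=0}^{m}u_i(x)t^i,\qquad u_m(x)\neq0,
\]
and let $\rho=\sigma_{r(x)}\circ\tau_a\in\Aut_D(A_h)$. Then $\rho(x)=ax$, so the left-hand side is $D(\rho(x))=aD(x)=\sum_i a\,u_i(x)t^i$.

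For the right-hand side, use that $\rho$ is an algebra map with $\rho(t)=a^{N-1}(t+r(x))$. This gives
\[
\rho(D(x))=\sum_{i=0}^{m}a^{(N-1)i}\,u_i(ax)\,(t+r(x))^i .
\]
By Lemma \ref{t_plus_r_Ah}, $(t+r(x))^i=t^i+i\,r(x)t^{i-1}+g_i$ with $\degt(g_i)\leq i-2$, and this expansion is already in normal form with coefficients on the left. The coefficients $u_i(ax)$ also sit on the left, so no reordering is needed. Hence the coefficient of $t^m$ is $a^{(N-1)m}u_m(ax)$. The coefficient of $t^{m-1}$ is
\[
m\,a^{(N-1)m}\,u_m(ax)\,r(x)+a^{(N-1)(m-1)}\,u_{m-1}(ax),
\]
because the terms with $i\leq m-2$ and the tails $g_i$ only contribute to $t$-degree at most $m-2$. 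I would double-check this point carefully using the uniqueness of the normal form $\sum u_i(x)t^i$; it is the only delicate step.

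Comparing $t^{m-1}$-coefficients on both sides, by uniqueness of the normal form, gives
\[
m\,a^{(N-1)m}\,u_m(ax)\,r(x)=a\,u_{m-1}(x)-a^{(N-1)(m-1)}\,u_{m-1}(ax).
\]
Since $m\geq1$, $\operatorname{char}\Bbbk=0$, $a\neq0$ and $u_m\neq0$, the factor $m\,a^{(N-1)m}u_m(ax)$ is a nonzero polynomial of degree $\deg u_m$. If $u_{m-1}=0$, the right-hand side vanishes and so $r=0$. Otherwise, taking degrees in $\Bbbk[x]$ gives
\[
\deg u_m+\deg r\leq\deg u_{m-1}
\quad\text{whenever } r\neq0.
\]
Therefore $M_D:=\max\{0,\deg u_{m-1}-\deg u_m\}$, or simply $\max\{0,\deg u_{m-1}\}$, works. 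It depends only on $D(x)$, and in particular not on $a$ or $r$.

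I do not expect a genuine obstacle. The main care point is the bookkeeping of the $t^{m-1}$-coefficient, namely checking that the lower terms $g_i$ from Lemma \ref{t_plus_r_Ah} and the summands with $i<m$ cannot contribute to it. The condition $D(\rho(t))=\rho(D(t))$ and the constraint $h(ax)=a^Nh(x)$ are not needed.
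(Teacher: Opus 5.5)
Your proposal is correct and follows the paper's proof essentially verbatim. Both compare the $t^{m-1}$-coefficients of $\rho(D(x))=aD(x)$ using Lemma \ref{t_plus_r_Ah}, obtain the same identity $(*)$, split on whether $u_{m-1}=0$, and arrive at the same bound $M_D=\max\{0,\deg u_{m-1}-\deg u_m\}$.
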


\begin{proof}
Write $D(x)=u=\sum_{i=0}^{m}u_i(x)t^i$, where $u_m(x)\neq0$ and $m\geq1$. Let $\rho=\sigma_{r(x)}\circ\tau_a\in\Aut_D(A_h)$. Since
$\rho(x)=ax$, the state $\rho D=D\rho$ gives
\[
\rho(D(x))
=
D(\rho(x))
=
D(ax)
=
aD(x)
=
au.
\]

On the other hand, since $\rho(t)=a^{N-1}(t+r(x))$, we have
\[
\rho(u)
=
\sum_{i=0}^{m}
u_i(ax)a^{i(N-1)}(t+r(x))^i.
\]

By Lemma \ref{t_plus_r_Ah}, the coefficient of $t^{m-1}$ in
$(t+r(x))^m$ is $mr(x)$. Then, notice that the coefficient of $t^{m-1}$ in
$\rho(u)$ is
\[
m a^{m(N-1)}u_m(ax)r(x)
+
a^{(m-1)(N-1)}u_{m-1}(ax).
\]
Comparing this with the coefficient of $t^{m-1}$ in $au$, we obtain
\[
m a^{m(N-1)}u_m(ax)r(x)
=
a u_{m-1}(x)
-
a^{(m-1)(N-1)}u_{m-1}(ax).
\tag{*}
\]

If $u_{m-1}=0$, then the right-hand side of $(*)$ is zero. Since $m\neq0$, $a\neq0$, and $u_m(ax)\neq0$, it follows that $r(x)=0$.

Suppose that $u_{m-1}\neq0$ and $r(x)\neq0$. Since
$\deg(u_m(ax))=\deg(u_m)$, the left-hand side of $(*)$ has degree $\deg(u_m)+\deg(r)$. On the other hand, each term on the right-hand side of $(*)$ has
degree $\deg(u_{m-1})$, and therefore
\[
\deg (
a u_{m-1}(x)
-
a^{(m-1)(N-1)}u_{m-1}(ax)
)
\leq
\deg(u_{m-1}).
\]

Consequently, $\deg(r)\leq\deg(u_{m-1})-\deg(u_m)$. Therefore, the degree of $r(x)$ is bounded independently of $\rho$. In fact, we can choose
\[
M_D=\max\bigl\{0,\deg(u_{m-1})-\deg(u_m)\bigr\}.
\]
\end{proof}

\begin{lemma}\label{Dx_polynomial}
Let $D\in\der(A_h)$ and suppose that $D(x)=p(x)\in\Bbbk[x]\setminus\{0\}$. Then, there exists $M_D\geq0$ such that, for every $\rho=\sigma_{r(x)}\circ\tau_a\in\Aut_D(A_h)$, we have
$\deg(r(x))\leq M_D$.
\end{lemma}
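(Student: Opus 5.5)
The plan is to imitate the proof of Lemma \ref{Dx_positive_t}, but to read off the bound from the equation $\rho(D(t))=D(\rho(t))$ rather than from $\rho(D(x))=D(\rho(x))$. Write $v:=D(t)=\sum_{i=0}^{m}v_i(x)t^i$. The first step is to bound $m=\degt(v)$ using only that $D$ is a derivation. Apply $D$ to the defining relation $tx-xt=h(x)$ and use $D(x)=p(x)$:
\[
[v,x]+[t,p(x)]=h'(x)p(x).
\]
By Lemma \ref{tf_Ah}, $[t,p(x)]=p'(x)h(x)$, so $[v,x]=h'(x)p(x)-p'(x)h(x)\in\Bbbk[x]$. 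By Lemma \ref{ti_x_Ah}, if $m\geq2$ then $[v,x]$ has $t$-degree $m-1\geq1$, with leading coefficient $mv_m(x)h(x)\neq0$. That is a contradiction, so $m\leq1$. Hence
\[
v=v_1(x)t+v_0(x).
\]

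Next, fix $\rho=\sigma_{r(x)}\circ\tau_a\in\Aut_D(A_h)$. Since $\rho(t)=a^{N-1}(t+r(x))$ and $D(r(x))=r'(x)p(x)$, the identity $\rho(D(t))=D(\rho(t))$ becomes
\[
v_1(ax)a^{N-1}(t+r(x))+v_0(ax)=a^{N-1}\bigl(v_1(x)t+v_0(x)+r'(x)p(x)\bigr).
\]
Both sides are already in normal form, with polynomial coefficients on the left, so no reordering is needed. We may assume $r\neq0$ and $\deg r\geq1$, since otherwise there is nothing to prove.

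We split into two cases according to whether $v_1$ vanishes.

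If $v_1=0$, comparing constant terms gives
\[
r'(x)p(x)=a^{1-N}v_0(ax)-v_0(x).
\]
Since $p\neq0$, this yields $\deg r\leq \deg v_0-\deg p+1$ (and forces $r$ constant when $v_0=0$).

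If $v_1\neq0$, the $t$-coefficients give $v_1(ax)=v_1(x)$. The constant terms then give
\[
a^{N-1}\bigl(v_1(x)r(x)-p(x)r'(x)\bigr)=a^{N-1}v_0(x)-v_0(ax).
\]
The right-hand side has degree at most $\deg v_0$, independently of $\rho$. Put $k=\deg r$. On the left, $v_1r$ has degree $\deg v_1+k$ and $pr'$ has degree $\deg p+k-1$. Unless these degrees agree and the leading terms cancel, the left side has degree at least $\deg v_1+k$, which gives $k\leq\deg v_0-\deg v_1$. Cancellation requires $\deg v_1=\deg p-1$ and $\mathrm{lc}(v_1)=k\,\mathrm{lc}(p)$. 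This determines $k$ uniquely as $\mathrm{lc}(v_1)/\mathrm{lc}(p)$, provided that ratio is a positive integer; in either case $k$ is bounded in terms of $D$ alone.

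Taking $M_D$ to be the maximum of the finitely many bounds obtained above (and $0$) gives the lemma. The main obstacle is the possible leading-term cancellation between $v_1r$ and $pr'$ in the case $v_1\neq0$. It is handled by observing that cancellation pins down $\deg r$ as a single number depending only on $D$, so it does not destroy boundedness. The reduction to $\degt(v)\leq1$ via the derivation identity is what keeps the rest of the computation this short.
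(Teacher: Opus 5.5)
Your proposal is correct and follows essentially the same route as the paper's proof. You first use the derivation identity on $tx-xt=h(x)$ and Lemma~\ref{ti_x_Ah} to force $D(t)=v_1(x)t+v_0(x)$, then compare the $t$- and $t^0$-coefficients of $\rho(D(t))=D(\rho(t))$, and run the same degree case analysis, where cancellation between $v_1r$ and $pr'$ can only occur for the single value $\deg r=\mathrm{lc}(v_1)/\mathrm{lc}(p)$ determined by $D$.
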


\begin{proof}
Write $D(t)=v\in A_h$. Applying $D$ to the relation $[t,x]=h(x)$ gives $[D(t),x]+[t,D(x)]=D(h(x))$. Since $D(x)=p(x)\in\Bbbk[x]$, we have also $D(h(x))=h'(x)p(x)$. Moreover, by Lemma \ref{tf_Ah}, $[t,p(x)]=h(x)p'(x)$. Therefore,
\[
[v,x]
=
h'(x)p(x)-h(x)p'(x)
\in\Bbbk[x].
\tag{1}
\]

We claim that $v$ has $t$-degree at most one. Suppose that
$v=\sum_{i=0}^{s}v_i(x)t^i$, with $v_s(x)\neq0$ and $s\geq2$. Since
\[
[v,x]
=
\sum_{i=0}^{s}v_i(x)[t^i,x],
\]
Lemma \ref{ti_x_Ah} shows that the coefficient of $t^{s-1}$ coming from the term $v_s(x)t^s$ is
$s v_s(x)h(x)$. For $i\leq s-1$, the commutator $[t^i,x]$ has $t$-degree at most $i-1\leq s-2$, so no other term contributes to the coefficient of $t^{s-1}$. Thus, this coefficient in $[v,x]$ is $s v_s(x)h(x)$. Since $\operatorname{char}(\Bbbk)=0$, $v_s(x)\neq0$ and $h(x)\neq0$, this coefficient is nonzero, contradicting $(1)$. Then, $D(t)=b(x)t+c(x)$, for some $b(x),c(x)\in\Bbbk[x]$. Substituting into
$(1)$, we obtain
\[
b(x)h(x)=h'(x)p(x)-h(x)p'(x).
\tag{2}
\]

Let $\rho=\sigma_{r(x)}\circ\tau_a\in\Aut_D(A_h)$. Since
$\rho(x)=ax$ and $\rho(t)=a^{N-1}(t+r(x))$, the condition $\rho D=D\rho$, applied to $t$, gives
\[
a^{N-1}b(ax)(t+r(x))+c(ax)=a^{N-1}
\bigl(b(x)t+c(x)+p(x)r'(x)\bigr),
\]

Comparing the coefficients of $t$, we obtain $b(ax)=b(x)$. Also, comparing the terms of $t$-degree zero, we obtain
\[
p(x)r'(x)-b(x)r(x)=a^{1-N}c(ax)-c(x).
\tag{3}
\]

Since $p(x)$, $b(x)$, and $c(x)$ depend only on $D$, they are fixed. Moreover, since $a\in\Bbbk^\ast$, $\deg(c(ax))=\deg(c)$, whenever $c\neq0$. Then, we have that $\deg\bigl(a^{1-N}c(ax)-c(x)\bigr)\leq\deg(c)$, so the degree of the right-hand side of $(3)$ is bounded
independently of $\rho$. We show that there exists a bound for $\deg(r(x))$ depending only on $D$.

Suppose that $b(x)=0$. Constant polynomials $r(x)$ already have bounded degree, so assume that $r(x)$ is nonconstant. Denote $d=\deg(r)$ and $n=\deg(p)$. Thus, $\deg\bigl(p(x)r'(x)\bigr)=n+d-1$. If $c\neq0$, the right-hand side of $(3)$ has degree at most
$\deg(c)$. Then, $n+d-1\leq\deg(c)$, and so $d\leq\deg(c)-n+1$. If $c=0$, then $(3)$ gives $p(x)r'(x)=0$, and then $r'(x)=0$, contradicting the assumption that $r(x)$ is nonconstant. Thus, in either case, the degree of $r(x)$ is bounded independently of $\rho$.

Suppose that $b(x)\neq0$, as in the previous case, $n=\deg(p)$, $s=\deg(b)$, $d=\deg(r)$, with $r(x)$ nonconstant. Then, $\deg\bigl(p(x)r'(x)\bigr)=n+d-1$ and $\deg\bigl(b(x)r(x)\bigr)=s+d$. If $s\neq n-1$, these two degrees are different. Then, no cancellation of their leading terms is possible, and $\deg\bigl(p(x)r'(x)-b(x)r(x)\bigr)=d+\max\{n-1,s\}$. Since the right-hand side of $(3)$ has bounded degree, $d$ is bounded. It remains to consider the case $s=n-1$. Let $p_n$, $b_s$, and $r_d$ denote the leading coefficients of $p(x)$, $b(x)$, and $r(x)$, respectively. The coefficient of
$x^{n+d-1}$ in $p(x)r'(x)-b(x)r(x)$ is $r_d(dp_n-b_s)$. Thus, the leading term can cancel only if $d=\frac{b_s}{p_n}$. If this does not hold, then
\[
\deg\bigl(p(x)r'(x)-b(x)r(x)\bigr)=n+d-1,
\]
and again the bound on the degree of the right-hand side of
$(3)$ gives a bound on $d$. If $d=\frac{b_s}{p_n}$, then $d$ is fixed, since $p_n$ and $b_s$ depend only on $D$.

Therefore, in every case, the degree of $r(x)$ is bounded by a constant depending only on $D$. Therefore, there exists $M_D\geq0$ such that
\[
\deg(r(x))\leq M_D,
\]
for every $\rho\in\Aut_D(A_h)$.
\end{proof}

\begin{theorem}\label{main_Ah}
Let $h\in\Bbbk[x]\setminus\Bbbk$. For every $D\in\der(A_h)$,
the following conditions are equivalent:
\begin{itemize}
    \item[(i)] $D$ is locally nilpotent;
    \item[(ii)] $\Aut_D(A_h)$ contains automorphisms of arbitrarily
    large degree.
\end{itemize}
\end{theorem}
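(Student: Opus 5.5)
The implication (i)$\Rightarrow$(ii) is exactly Proposition \ref{LND_large_Ah}, so the work is in (ii)$\Rightarrow$(i). I will prove the contrapositive: if $D$ is not locally nilpotent, then the degrees of elements of $\Aut_D(A_h)$ are bounded. By Lemma \ref{large_degree_r}, and since every automorphism has the form $\rho=\sigma_{r(x)}\circ\tau_a$ after the normalization of Proposition \ref{normalization_Ah}, it suffices to find $M_D$ such that $\deg(r)\leq M_D$ for every $\rho=\sigma_{r(x)}\circ\tau_a\in\Aut_D(A_h)$. The normalization preserves both local nilpotence and boundedness of degree, so I may work with the normalized $h$ throughout.

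The split is according to $D(x)$. If $\degt(D(x))\geq1$, Lemma \ref{Dx_positive_t} gives the bound. If $D(x)=p(x)\in\Bbbk[x]\setminus\{0\}$, Lemma \ref{Dx_polynomial} gives the bound. In both cases the bound holds regardless of whether $D$ is locally nilpotent; this is consistent with the theorem, because Proposition \ref{prop_LND_Ah} shows that locally nilpotent derivations satisfy $D(x)=0$.

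The remaining case is $D(x)=0$. I apply $D$ to the relation $[t,x]=h(x)$. Since $D$ kills $\Bbbk[x]$, this gives $[D(t),x]=0$. Lemma \ref{centralizer_x_Ah} then forces $D(t)=g(x)\in\Bbbk[x]$, so $D=D_{g(x)}$, which is locally nilpotent by Proposition \ref{prop_LND_Ah}. Thus a non-locally-nilpotent $D$ always falls into one of the two earlier cases, and the contrapositive is established. The zero derivation falls in this last case and is trivially locally nilpotent; its isotropy group is the whole of $\Aut(A_h)$, which has unbounded degree, so the equivalence holds for it too.

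The main difficulty already lies in Lemmas \ref{Dx_positive_t} and \ref{Dx_polynomial}, which are given. What is left for this proof is the case $D(x)=0$, and there the only point needing care is the identification $D=D_{g(x)}$ through the centralizer of $x$.
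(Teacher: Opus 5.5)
Your proof is correct and follows the paper's argument: the same case split on $D(x)$, with Lemmas \ref{Dx_positive_t} and \ref{Dx_polynomial} ruling out $D(x)\neq0$, and the relation $[t,x]=h(x)$ plus the centralizer of $x$ giving $D=D_{g(x)}$ when $D(x)=0$. The only difference is that the paper argues directly from a sequence $\rho_n$ with $\deg(r_n)\to\infty$, while you phrase the same steps as a contrapositive; your separate check of the zero derivation is harmless but not needed.
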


\begin{proof}
The implication $(i)\Rightarrow(ii)$ follows from
Proposition \ref{LND_large_Ah}.

Conversely, suppose that $\Aut_D(A_h)$ contains automorphisms of arbitrarily large degree. By Lemma \ref{large_degree_r}, there exists
a sequence $\rho_n=\sigma_{r_n(x)}\circ\tau_{a_n}
\in\Aut_D(A_h)$ such that $\deg(r_n)\longrightarrow\infty$.

We claim that $D(x)=0$. Indeed, if $D(x)$ contains a positive power of $t$, then $\degt(D(x))\geq1$, and Lemma \ref{Dx_positive_t} implies that the degrees of the
polynomials $r_n(x)$ are bounded, a contradiction. Then, $D(x)\in\Bbbk[x]$. If $D(x)\neq0$, then Lemma \ref{Dx_polynomial} again implies that the degrees of the polynomials $r_n(x)$ are bounded, which is impossible. Therefore, $D(x)=0$.

Applying $D$ to the defining relation $[t,x]=h(x)$, we obtain
\[
[D(t),x]=D(h(x))=h'(x)D(x)=0.
\]
By Lemma \ref{centralizer_x_Ah}, $C_{A_h}(x)=\Bbbk[x]$, and therefore $D(t)\in\Bbbk[x]$. Thus,
\[
D(x)=0,
\quad
D(t)=g(x),
\]
for some $g(x)\in\Bbbk[x]$. Then, $D=D_{g(x)}$, and Proposition \ref{prop_LND_Ah} gives $D\in\LND(A_h)$.
\end{proof}

\begin{remark}[Comparison with Pan's argument]

Pan \cite{Pan2022} uses in the proof of Theorem~\ref{thm_Pan} the invariant $e(D)$, already considered in \cite{BaltazarPan2021}, which
counts the $D$-stable reduced principal ideals of height one in $\Bbbk[x,y]$. This invariant organizes the proof according to the invariant curves of $D$. The case $0<e(D)<\infty$ in Pan's proof is particularly similar to our argument: after suitable reductions, unbounded isotropy is encoded by a polynomial parameter $P(x)$ of arbitrarily large degree, and the commutation equations rule out a nonzero $\partial_x$-component of $D$. For $A_h$, the explicit description of the whole automorphism group makes such a case distinction unnecessary. After normalization,
\[
\Aut(A_h)
=
\{
\sigma_{r(x)}\circ\tau_a
\ \mid\
h(ax)=a^Nh(x)
\},
\]
and unbounded degree is exactly reflected in the polynomial parameter $r(x)$. The two boundedness lemmas above show that polynomials $r(x)$ of arbitrarily large degree cannot occur when $D(x)\neq0$. Then,
unbounded isotropy forces $D(x)=0$, and the defining relation then gives $D(t)\in\Bbbk[x]$.

\end{remark}

\begin{remark}[The degree filtration]
The description of $\Aut(A_h)$ provides a natural analogue of the degree filtration considered in the polynomial case. For $d\geq0$, denote
\[
\mathcal A_d(A_h)=\left\{
\sigma_{r(x)}\circ\tau_a\in\Aut(A_h)
\ \middle|\
\deg(r(x))\leq d
\right\}.
\]

Then,
\[
\Aut(A_h)=
\bigcup_{d\geq0}\mathcal A_d(A_h).
\]
Moreover, since $\deg(\sigma_{r(x)}\circ\tau_a)=\max\{1,\deg(r(x))\}$, for every $d\geq1$ we have
\[
\mathcal A_d(A_h)=\{\rho\in\Aut(A_h)\mid \deg(\rho)\leq d\}.
\]

Notice that $\mathcal A_0(A_h)$ consists of the automorphisms for which $r(x)$ is constant, and these automorphisms have degree one. 

Let $G_h=\{a\in\Bbbk^\ast\mid h(ax)=a^Nh(x)\}$. Writing $r(x)=r_0+r_1x+\cdots+r_dx^d$, the set $\mathcal A_d(A_h)$ is naturally parametrized by $\Bbbk^{d+1}\times G_h$. For a fixed $D\in\der(A_h)$, the conditions $\rho D(x)=D\rho(x)$ and $\rho D(t)=D\rho(t)$, after writing these identities in PBW form, their coefficients give polynomial equations in $r_0,\ldots,r_d$ and $a$. Then,
$\Aut_D(A_h)\cap\mathcal A_d(A_h)$ is Zariski closed in this finite-dimensional parameter space. Furthermore, Theorem \ref{main_Ah} is equivalent to saying that $D$ is locally nilpotent if and only if $\Aut_D(A_h)$ is not contained in any $\mathcal A_d(A_h)$.
\end{remark}

\section{The free associative algebra}\label{section_free}

Let $F=\Bbbk\langle x,y\rangle$ be the free associative algebra on two generators. The behavior of locally nilpotent derivations of $F$ is closely related
to the corresponding theory for $\Bbbk[x,y]$. In fact, their triangulability can be obtained from Rentschler's theorem together with the classical results of Czerniakiewicz and Makar-Limanov on automorphisms of the free associative algebra of rank two.

We recall the noncommutative analogue of Rentschler's theorem. The following result is due to Drensky and Limanov \cite{DrenskyMakarLimanov}; see also
\cite{AlimbaevNaurazbekovaKozybaev,CrodeShestakov}.

\begin{proposition}[{\cite[Proposition 3.2]{DrenskyMakarLimanov}}]\label{free_triangularization}
Let $D\in\LND(F)$. Then, there exists a free generating system $x',y'$ of $F$ such that
\[
D(y')=0,
\quad
D(x')=f(y')
\]
for some $f(y')\in\Bbbk[y']$.
\end{proposition}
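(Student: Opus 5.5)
The plan is to deduce Proposition \ref{free_triangularization} from Rentschler's theorem by passing to the abelianization and lifting back to $F$ with the Czerniakiewicz--Makar-Limanov theorem. Let $\pi:F\to\Bbbk[x,y]$ be the abelianization. The kernel of $\pi$ is the two-sided ideal generated by commutators, and every derivation maps commutators to sums of products involving commutators. Hence every $D\in\der(F)$ preserves $\ker\pi$ and induces a derivation $\bar D=\pi_\ast(D)$ of $\Bbbk[x,y]$. If $D$ is locally nilpotent, so is $\bar D$.

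Suppose first that $\bar D\neq 0$. Rentschler's theorem gives an automorphism $\bar\varphi$ of $\Bbbk[x,y]$ such that $\bar\varphi^{-1}\bar D\bar\varphi=f(y)\partial_x$. By Czerniakiewicz--Makar-Limanov, $\pi$ induces an isomorphism $\Aut(F)\to\Aut(\Bbbk[x,y])$, so $\bar\varphi$ lifts to an automorphism $\varphi$ of $F$. Replacing $D$ by $\varphi^{-1}D\varphi$, we may assume $\bar D=f(y)\partial_x$. Equivalently, $D(y)\in\ker\pi$ and $D(x)-f(y)\in\ker\pi$. In this normalized situation we must prove that $D(y)=0$ and $D(x)\in\Bbbk[y]$.

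For this I would use the degree-function and Makar-Limanov-invariant argument of Drensky--Makar-Limanov. The first step is to show that $\ker D$ is a free subalgebra of transcendence degree one, generated by some $z$. Since $F$ is a free ideal ring (Cohn), the kernel of a nonzero locally nilpotent derivation is a free algebra in which centralizers of elements are polynomial rings. Using the bigrading of $F$ and passing to leading homogeneous components with respect to a weight, one checks that $\ker D$ cannot contain two algebraically independent elements, because the abelianized kernel is $\Bbbk[y]$. Then one shows, as in Rentschler's proof, that $z$ is part of a free generating system. Its image $\pi(z)$ lies in $\ker\bar D=\Bbbk[y]$, and generation arguments give $\pi(z)=y+c$ up to scalars, so $z=y'$ is a coordinate of $F$ by Czerniakiewicz--Makar-Limanov (an element whose abelianization is a coordinate and which generates a kernel of an LND is a coordinate). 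Next, choose a local slice $s$ with $D(s)\in\ker D\setminus\{0\}$ and $D^2(s)=0$. The argument via $\deg$ with respect to $D$ (the function $\deg_D(a)=\max\{n: D^n a\neq0\}$, which is additive on products in a domain) shows $F=\Bbbk[y'][s]$-like generation, forcing $\{y',x'\}$ with $D(x')\in\Bbbk[y']$.

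Finally, we must rule out the case $\bar D=0$. Here the argument would be that a locally nilpotent $D$ with $D(F)\subseteq\ker\pi$ must be zero. This uses the same $\deg_D$ additivity: the leading nonzero iterates $D^{n}(x)$ and $D^{m}(y)$ lie in $\ker D$, and the kernel argument above applies. I expect the central obstacle to be proving that $\ker D$ is generated by a single coordinate $y'$, namely lifting the commutative coordinate statement to $F$. I would try first to import Drensky--Makar-Limanov's graded/weight-degree reduction, rather than reprove it from scratch.
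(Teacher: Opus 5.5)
Your reduction is correct and follows the route the paper indicates. The paper only cites Drensky--Makar-Limanov, remarking that triangulability follows from Rentschler's theorem together with Czerniakiewicz--Makar-Limanov. You abelianize, apply Rentschler to $\pi_\ast(D)$, lift the coordinate change $\bar\varphi$ to $\varphi\in\Aut(F)$, and reduce to $\pi_\ast(D)=f(y)\partial_x$.

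The gap is in how you finish. Your plan rests on $\ker D$ being generated by a single element $z$ (``transcendence degree one''), and this fails already for the normal form you are aiming at. If $D(y)=0$ and $D(x)=f(y)$, then $D([x,y])=[f(y),y]=0$, so $y,[x,y]\in\ker D$. But $[y,[x,y]]=2yxy-y^2x-xy^2\neq0$. So $\ker D$ is not even commutative, hence not of the form $\Bbbk[z]$. For $D=\partial_x$ it is in fact the free algebra on the infinitely many elements $\ad_x^n(y)$, $n\geq0$. Your heuristic ``the abelianized kernel is $\Bbbk[y]$'' misleads you because kernel elements such as $[x,y]$ lie in $\ker\pi$ and are invisible after abelianization. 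Consequently three steps collapse: the slice argument imported from the commutative case, the identification $z=y'$, and your treatment of the case $\pi_\ast(D)=0$, which you also route through this kernel claim.

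The missing idea is the \emph{injectivity} of $\gamma\colon\Aut(F)\to\Aut(\Bbbk[x,y])$, applied to the flow of $D$ rather than to its kernel. You only used surjectivity, to lift $\bar\varphi$.

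\begin{enumerate}
\item For $c\in\Bbbk$, $\exp(cD)=\sum_{n\geq0}c^nD^n/n!$ is an automorphism of $F$. Since $\pi D=\pi_\ast(D)\pi$, its image under $\gamma$ is $\exp(c\,\pi_\ast(D))$. After your normalization this is $x\mapsto x+cf(y)$, $y\mapsto y$.
\item This triangular automorphism has the evident lift $x\mapsto x+cf(y)$, $y\mapsto y$ in $\Aut(F)$. Injectivity of $\gamma$ then gives $\exp(cD)(x)=x+cf(y)$ and $\exp(cD)(y)=y$ for all $c\in\Bbbk$.
\item Both sides are polynomials in $c$ with coefficients in $F$, and $\Bbbk$ is infinite. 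Comparing coefficients of $c$ yields $D(x)=f(y)$ and $D(y)=0$.
\item Undoing the conjugation, $x'=\varphi(x)$ and $y'=\varphi(y)$ satisfy the proposition.
\end{enumerate}

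The same computation with $f=0$ (no Rentschler needed) shows that $\pi_\ast(D)=0$ forces $D=0$. So no kernel or slice analysis is required at all.
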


\begin{proposition}\label{free_LND_large}
If $0\neq D\in\LND(F)$, then $\Aut_D(F)$ contains automorphisms of
arbitrarily large degree.
\end{proposition}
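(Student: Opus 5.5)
The plan is to imitate the proofs of Proposition \ref{LND_large_Ah} and Proposition \ref{isotropy_fp}: triangularize $D$ and then exhibit an explicit infinite family of triangular automorphisms commuting with it. By Proposition \ref{free_triangularization} there is a free generating system $x',y'$ of $F$ with
\[
D(y')=0,\qquad D(x')=f(y'),\qquad f(y')\in\Bbbk[y'].
\]
Since $D\neq0$, we have $f\neq0$. Let $\varphi\in\Aut(F)$ be the automorphism $\varphi(x)=x'$, $\varphi(y)=y'$. Then $D_0:=\varphi^{-1}D\varphi$ is the derivation with $D_0(y)=0$ and $D_0(x)=f(y)$. Moreover, $\Aut_D(F)=\varphi\,\Aut_{D_0}(F)\,\varphi^{-1}$.

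For each $g(y)\in\Bbbk[y]$, let $\sigma_g$ be the automorphism $x\mapsto x+g(y)$, $y\mapsto y$. We check that $\sigma_g D_0=D_0\sigma_g$ on the generators:
\[
D_0\sigma_g(y)=0=\sigma_gD_0(y),\qquad D_0\sigma_g(x)=f(y)+D_0(g(y))=f(y)=\sigma_g(f(y))=\sigma_gD_0(x),
\]
using $D_0(y)=0$. Two derivations agreeing on generators coincide, so $\sigma_g\in\Aut_{D_0}(F)$ for every $g$. Also $\deg(\sigma_g)=\max\{1,\deg g\}$, which is unbounded.

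The main point needing care is transporting unboundedness through conjugation by the fixed $\varphi$. Let $c=\max\{\deg\varphi,\deg\varphi^{-1}\}$, with the degree of an automorphism defined, as in the paper, as the maximum total degree of the images of $x$ and $y$. For any $\rho\in\Aut(F)$ we have $\deg(\alpha\beta)\le\deg\alpha\cdot\deg\beta$, because substituting polynomials of degree at most $\deg\alpha$ into a word of degree at most $\deg\beta$ has degree at most the product. Applying this to $\rho=\varphi^{-1}(\varphi\rho\varphi^{-1})\varphi$ gives
\[
\deg(\varphi\rho\varphi^{-1})\ \ge\ \deg(\rho)/c^2.
\]
Hence the automorphisms $\varphi\sigma_g\varphi^{-1}\in\Aut_D(F)$ have degree at least $\deg(g)/c^2$, which tends to infinity as $\deg g\to\infty$.

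The same remark justifies the sentence in the proof of Theorem \ref{Pan_A1_LF} that conjugation by a fixed automorphism preserves the distinction between bounded and unbounded degree.
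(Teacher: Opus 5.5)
Your proof is correct and follows the paper's argument. You triangularize $D$ via Proposition~\ref{free_triangularization} and use the family $x'\mapsto x'+g(y')$, $y'\mapsto y'$; your $\varphi\sigma_g\varphi^{-1}$ is exactly the paper's $\sigma_g$ written in the coordinates $x',y'$. The only addition is your estimate $\deg(\varphi\rho\varphi^{-1})\ge\deg(\rho)/c^2$, obtained from submultiplicativity of degree under composition; it rigorously justifies the step the paper only asserts, namely that conjugation by a fixed automorphism preserves unboundedness of degree.
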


\begin{proof}
By Proposition \ref{free_triangularization}, choose free generators $x',y'$ such that
\[
D(y')=0,
\quad
D(x')=f(y').
\]
For every $g(y')\in\Bbbk[y']$, define
$\sigma_g(x')=x'+g(y')$ and $\sigma_g(y')=y'$. Since $x',y'$ freely generate $F$, the map $\sigma_g$ is an
automorphism, with inverse $x'\mapsto x'-g(y')$ and $y'\longmapsto y'$. Moreover,
\[
D(\sigma_g(x'))=D(x'+g(y'))=f(y')=\sigma_g(D(x')),
\]
and the condition on $y'$ is immediate. Then, $\sigma_g\in\Aut_D(F)$. Taking $g$ of arbitrarily large degree gives automorphisms of arbitrarily large degree in the coordinates $x',y'$. Since the change of free generators is fixed, boundedness or unboundedness of degree is preserved under conjugation. Therefore, $\Aut_D(F)$ has unbounded degree.
\end{proof}

In contrast with Theorem \ref{thm_Pan}, the converse implication fails in the free associative algebra.

\begin{proposition}\label{free_counterexample}
Let $F=\Bbbk\langle x,y\rangle$, and define $D\in\der(F)$ by
\[
D(y)=0,
\quad
D(x)=[x,y].
\]

Then, $D$ is not locally nilpotent, while $\Aut_D(F)$ contains automorphisms of arbitrarily large degree.
\end{proposition}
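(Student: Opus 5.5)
First observe that $D=-\ad_y$. Indeed, $-\ad_y(y)=0$ and $-\ad_y(x)=-[y,x]=[x,y]$. A derivation of $F$ is determined by its values on the free generators, so $D=-\ad_y$ on all of $F$.

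\emph{$D$ is not locally nilpotent.} Iterating gives $D^n(x)=(-1)^n\ad_y^n(x)$. Expanding,
\[
\ad_y^n(x)=\sum_{k=0}^{n}(-1)^{k}\binom{n}{k}y^{n-k}xy^{k}.
\]
The words $y^{n-k}xy^k$ are distinct basis monomials of $F$, and the coefficient of $y^nx$ is $1$. Hence $D^n(x)\neq0$ for every $n$. Alternatively, one can argue by degree: the homogeneous element $\ad_y^n(x)$ has degree $n+1$, and its $y^nx$-component is never zero.

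\emph{Unbounded isotropy.} Since $D=-\ad_y$, we have $\rho D\rho^{-1}=-\ad_{\rho(y)}$ for every $\rho\in\Aut(F)$. So $\rho\in\Aut_D(F)$ exactly when $\ad_{\rho(y)-y}=0$, i.e.\ when $\rho(y)-y$ lies in the center of $F$, which is $\Bbbk$. It is enough to use the sufficient direction: every automorphism with $\rho(y)=y$ lies in $\Aut_D(F)$. One can also check this directly on generators. Take the triangular automorphisms
\[
\sigma_g(x)=x+g(y),\qquad \sigma_g(y)=y,\qquad g\in\Bbbk[y].
\]
Each $\sigma_g$ is an automorphism, with inverse $\sigma_{-g}$. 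On $y$ both sides of $\sigma_gD=D\sigma_g$ vanish. On $x$ we need
\[
\sigma_g(D(x))=[x+g(y),y]=[x,y]
\qquad\text{and}\qquad
D(\sigma_g(x))=D(x)+D(g(y))=[x,y],
\]
both of which hold because $g(y)$ commutes with $y$ and $D(y)=0$. Taking $g=y^d$ gives $\deg\sigma_g=d$, which is unbounded.

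There is no real obstacle here. The only point that needs care is the nonvanishing of $\ad_y^n(x)$, and it follows from linear independence of words in the free algebra.
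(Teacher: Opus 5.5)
Your proof is correct and follows essentially the same route as the paper: you identify $D=-\ad_y$, show $D^n(x)\neq0$ via the explicit binomial expansion into distinct words (the paper obtains the same formula by induction using Pascal's identity), and use the triangular automorphisms $\sigma_g$, which fix $y$ and therefore commute with $\ad_y$. Your extra direct check on generators and your remark that the center of $F$ is $\Bbbk$ are correct but not needed.
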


\begin{proof}
We have $D=-\ad_y$. We claim that, for every $n\geq1$,
\[
D^n(x)=\sum_{i=0}^{n}
(-1)^i
\binom{n}{i}
y^ixy^{n-i}.
\]
The formula is clear for $n=1$. If it holds for some $n$, then, since $D(y)=0$,
\[
D\!\left(y^ixy^{n-i}\right)=y^ixy^{n-i+1}-y^{i+1}xy^{n-i},
\]
and the formula for $n+1$ follows from Pascal's identity.
The monomials $y^ixy^{n-i}$, with $0\leq i\leq n$, are pairwise distinct. Then, $D^n(x)\neq0$, for every $n\geq1$, and therefore $D$ is not locally nilpotent.

For every $g(y)\in\Bbbk[y]$, consider the automorphism
\[
\sigma_g(x)=x+g(y),
\quad
\sigma_g(y)=y.
\]

Since $\sigma_g(y)=y$, $\sigma_g\ad_y\sigma_g^{-1}=\ad_y$, and therefore $\sigma_gD=D\sigma_g$. Thus, $\sigma_g\in\Aut_D(F)$. Finally, $\deg(\sigma_g)=\max\{1,\deg(g)\}$, and then $\Aut_D(F)$ contains automorphisms of arbitrarily large degree.
\end{proof}

\subsection{Abelianization: automorphisms and derivations}

Let $\pi:F=\Bbbk\langle x,y\rangle\longrightarrow\Bbbk[x,y]$ be the natural abelianization map, and denote $I=\ker(\pi)$. Thus $I$ is the commutator ideal of $F$, which in rank two is the
two-sided ideal generated by $[x,y]$.

The behavior of automorphisms and derivations under abelianization is rather different. Every automorphism of $F$ preserves $I$ and therefore induces an automorphism of $\Bbbk[x,y]$. This gives a homomorphism
\[
\gamma:
\Aut(F)
\longrightarrow
\Aut(\Bbbk[x,y]).
\]

By \cite[Theorem 1]{Czerniakiewicz1971}, the homomorphism $\gamma$ is injective. On the other hand, the Jung--van der Kulk theorem \cite{Jung,vanDerKulk} implies that every automorphism of $\Bbbk[x,y]$ is tame. Since affine and triangular automorphisms lift naturally to $F$, the map $\gamma$ is also surjective. Consequently,
\[
\Aut(\Bbbk\langle x,y\rangle)
\simeq
\Aut(\Bbbk[x,y]).
\]

This is the classical result of Czerniakiewicz and Makar-Limanov; see also \cite{MakarLimanov}. Equivalently, every automorphism of $\Bbbk[x,y]$ has a unique lift to $F$. In particular, $\gamma(\rho)=\id$, then $\rho=\id$.

The situation for derivations is different. For every
$D\in\der(F)$,
\[
D([a,b])=[D(a),b]+[a,D(b)].
\]

Since $I$ is generated by commutators, it follows that
$D(I)\subseteq I$. Then, $D$ induces a derivation
$\pi_\ast(D)\in\der(\Bbbk[x,y])$. In this way, abelianization defines a linear map
\[
\pi_\ast:
\der(F)
\longrightarrow
\der(\Bbbk[x,y]).
\]

The map $\pi_\ast$ is not injective. Indeed, for every
$w\in F$, $\pi_\ast(\ad_w)=0$. Thus, a nonzero
derivation of $F$ may become trivial in the commutative quotient.

\begin{corollary}\label{kernel_lnd_free}
If $D\in\LND(F)$ and $\pi_\ast(D)=0$, then $D=0$. Equivalently,
\[
\ker(\pi_\ast)\cap\LND(F)=\{0\}.
\]
\end{corollary}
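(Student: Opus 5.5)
Let $D\in\LND(F)$ with $\pi_\ast(D)=0$, and suppose $D\neq0$. The plan is to use the triangular form of Proposition~\ref{free_triangularization} and then pass to the abelianization.

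By Proposition~\ref{free_triangularization} there is a free generating system $x',y'$ of $F$ with $D(y')=0$ and $D(x')=f(y')$ for some $f\in\Bbbk[y']$. Since $x',y'$ generate $F$, a derivation is determined by its values on them. So $D\neq0$ forces $f\neq0$.

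Next we compute $\pi_\ast(D)$ in the abelianized coordinates. Set $X=\pi(x')$ and $Y=\pi(y')$. The automorphism $x\mapsto x'$, $y\mapsto y'$ of $F$ induces, through $\gamma$, an automorphism of $\Bbbk[x,y]$. Hence $X,Y$ is a coordinate system of $\Bbbk[x,y]$, and in particular $X,Y$ are algebraically independent. By definition $\pi_\ast(D)\circ\pi=\pi\circ D$. Therefore
\[
\pi_\ast(D)(Y)=\pi(D(y'))=0,\qquad \pi_\ast(D)(X)=\pi(f(y'))=f(Y).
\]
Since $f\neq0$ and $Y$ is transcendental over $\Bbbk$, we get $f(Y)\neq0$ in $\Bbbk[x,y]$. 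Thus $\pi_\ast(D)\neq0$, which contradicts the hypothesis, so $D=0$. The reformulation $\ker(\pi_\ast)\cap\LND(F)=\{0\}$ is immediate, since the zero derivation is locally nilpotent and lies in $\ker(\pi_\ast)$.

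The only point needing care is that $\pi(x'),\pi(y')$ form a coordinate system of $\Bbbk[x,y]$. This holds because every automorphism of $F$ preserves the commutator ideal $I$, so $\gamma$ is defined, as recalled above. Injectivity of $\gamma$ is not needed here.
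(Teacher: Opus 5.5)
Your proof is correct and follows the paper's argument: triangularize $D$ via Proposition~\ref{free_triangularization}, observe that $\pi(x'),\pi(y')$ are coordinates of $\Bbbk[x,y]$ because the automorphism $x\mapsto x'$, $y\mapsto y'$ descends to the abelianization, and use the transcendence of $\pi(y')$ to conclude $f=0$. The only difference is cosmetic (you argue by contradiction while the paper argues directly), and you are right that injectivity of $\gamma$ is not needed.
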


\begin{proof}
By Proposition \ref{free_triangularization}, there exist free generators $x',y'$ of $F$ such that
\[
D(y')=0,
\quad
D(x')=f(y'),
\]
for some $f(y')\in\Bbbk[y']$.

Let $\overline{x'}=\pi(x')$ and $\overline{y'}=\pi(y')$. Since $x',y'$ form a free generating system of $F$, there exists $\alpha\in\Aut(F)$ such that $\alpha(x)=x'$ and $\alpha(y)=y'$. Passing to the abelianization, $\alpha$ induces an automorphism of $\Bbbk[x,y]$ sending
\[
x\longmapsto\overline{x'},
\quad
y\longmapsto\overline{y'}.
\]

Then, $\overline{x'}$ and $\overline{y'}$ form a system of polynomial coordinates of $\Bbbk[x,y]$. In particular,
$\overline{y'}$ is transcendental over $\Bbbk$. Since $\pi_\ast(D)=0$, we have
\[
0
=
\pi_\ast(D)(\overline{x'})
=
\pi(D(x'))
=
\pi(f(y'))
=
f(\overline{y'}).
\]

As $\overline{y'}$ is transcendental over $\Bbbk$, it follows that $f=0$. Therefore, $D(x')=D(y')=0$. Since $x',y'$ generate $F$, we conclude that $D=0$.
\end{proof}

The preceding corollary shows that a nonzero locally nilpotent
derivation of $F$ cannot vanish under abelianization. However,
local nilpotence of the induced derivation $\pi_*(D)$ does not
imply local nilpotence of $D$. The following example shows that
the converse to Pan's criterion can fail even when $\pi_*(D)$
is nonzero and locally nilpotent.

\begin{proposition}\label{prop:free-nonzero-abelianization}
Let $F=\Bbbk\langle x,y\rangle$, and let
$D\in\operatorname{Der}(F)$ be defined by
\[
D(y)=0,
\quad
D(x)=1+[x,y].
\]
Then $D$ is not locally nilpotent, whereas $\pi_*(D)=\partial_x$ is nonzero and locally nilpotent. Moreover, $\operatorname{Aut}_D(F)$ contains automorphisms of arbitrarily large degree.
\end{proposition}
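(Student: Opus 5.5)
Since $D(y)=0$, the derivation $D$ coincides with $-\ad_y+\partial$, where $\partial$ is the derivation with $\partial(x)=1$ and $\partial(y)=0$. Indeed, $-\ad_y(x)=[x,y]$ and $-\ad_y(y)=0$. The plan has three parts, in this order: compute $\pi_*(D)$, show that $D$ is not locally nilpotent, and exhibit automorphisms of large degree in $\Aut_D(F)$.

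\textbf{Abelianization.} Since $\pi([x,y])=0$, we get $\pi_*(D)(x)=1$ and $\pi_*(D)(y)=0$. Hence $\pi_*(D)=\partial_x$, which is nonzero and locally nilpotent on $\Bbbk[x,y]$.

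\textbf{Non-nilpotence.} I would look at $D^n(x)$ for $n\ge 2$. We have $D(x)=1+xy-yx$. Since $D(1)=0$ and $D(y)=0$,
\[
D^2(x)=D(x)y-yD(x)=[D(x),y]=[xy-yx,y]=xy^2-2yxy+y^2x,
\]
because $[1,y]=0$. By induction, for $n\ge1$,
\[
D^{n+1}(x)=[D^n(x),y]=\sum_{i=0}^{n}(-1)^i\binom{n}{i}y^ixy^{n-i}\cdot(\text{sign adjustments}),
\]
since the constant $1$ is killed after the first step. The result is exactly the iterated commutator $[\ldots[[x,y],y]\ldots,y]$, up to a sign. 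By the computation already done in Proposition \ref{free_counterexample}, this is a nonzero combination of the distinct monomials $y^ixy^{n-i}$. So $D^n(x)\neq0$ for all $n$, and $D$ is not locally nilpotent.

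\textbf{Isotropy.} The natural candidates are $\sigma_g(x)=x+g(y)$, $\sigma_g(y)=y$, as in Proposition \ref{free_counterexample}. On $y$ the condition $\sigma_gD=D\sigma_g$ is trivial. On $x$ we need
\[
\sigma_g(D(x))=1+[x+g(y),y]=1+[x,y],
\]
and
\[
D(\sigma_g(x))=D(x)+D(g(y))=1+[x,y],
\]
since $D$ kills $\Bbbk[y]$. The two sides agree, so $\sigma_g\in\Aut_D(F)$ for every $g\in\Bbbk[y]$. Moreover $\deg\sigma_g=\max\{1,\deg g\}$, which is unbounded as $g$ varies.

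No step is a real obstacle. The only point needing care is the non-nilpotence: one must check that the constant term $1$ disappears after the first application of $D$, which holds because $D$ kills constants. After that the computation reduces to the one in Proposition \ref{free_counterexample}.
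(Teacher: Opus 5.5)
Your proof is correct and follows essentially the paper's argument: you use the same computation $\pi_*(D)=\partial_x$, the same automorphisms $\sigma_g(x)=x+g(y)$, $\sigma_g(y)=y$ checked on generators, and the same reduction $D^n(x)=(-\ad_y)^n(x)$ for $n\geq 2$ via $D([u,y])=[D(u),y]$ and $[1,y]=0$. The only blemish is cosmetic: your displayed induction formula has an index shift and needs no ``sign adjustments''. Since $-\ad_y(u)=[u,y]$, one has exactly $D^n(x)=\sum_{i=0}^{n}(-1)^i\binom{n}{i}y^ixy^{n-i}$ for $n\geq 2$, whose coefficient of $xy^n$ is $1$.
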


\begin{proof}
Notice that
\[
\pi_*(D)(x)=1,
\quad
\pi_*(D)(y)=0.
\]
Thus, $\pi_*(D)=\partial_x$, which is nonzero and locally nilpotent. For every $g(y)\in\Bbbk[y]$, consider the automorphism
\[
\sigma_g(x)=x+g(y),
\quad
\sigma_g(y)=y.
\]
Since $D(y)=0$, we have $D(g(y))=0$, and then $D(\sigma_g(x))=1+[x,y]$. On the other hand,
\[
\sigma_g(D(x))
 =1+[x+g(y),y]
 =1+[x,y].
\]

The condition $D(\sigma_g(y))=\sigma_g(D(y))$ is immediate.
Therefore, $\sigma_g\in\operatorname{Aut}_D(F)$. Taking $g(y)=y^m$, for $m\geq 1$, gives $\deg(\sigma_g)=m$, so the degrees are unbounded.

It remains to show that $D$ is not locally nilpotent.
For $u\in F$, the identity $D(y)=0$ gives $D([u,y])=[D(u),y]$. In particular, $D^2(x)=[[x,y],y]$. It follows inductively that, for every $n\geq 2$,
\[
D^n(x)=(-\operatorname{ad}_y)^n(x)
      =\sum_{i=0}^{n}(-1)^i\binom{n}{i}y^ixy^{n-i}.
\]

The elements $y^ixy^{n-i}$, with $0\leq i\leq n$, are pairwise
distinct, and the coefficient of $xy^n$ is $1$.
Consequently, $D^n(x)\neq 0$ for every $n\geq 2$.
Therefore, $D$ is not locally nilpotent.
\end{proof}

In this example, abelianization removes the commutator term
$[x,y]$ and retains the nonzero locally nilpotent derivation
$\partial_x$. Nevertheless, the original derivation is not
locally nilpotent and its isotropy group has unbounded degree.
Thus, Pan's criterion can fail for derivations of $F$ with nonzero abelianization.

\section{The quantum cases}\label{section_quantum}

M. Awami, M. Van den Bergh and F. Van Oystaeyen
\cite[Section 2.1]{AwamiVanDenBerghVanOystaeyen}, and J. Alev and F. Dumas \cite[Proposition 3.2]{AlevDumas}, proved that, given any $\Bbbk$-linear automorphism $\sigma$ of $\Bbbk[x]$ and $\delta$ a $\sigma$-derivation, the Ore extension $S=\Bbbk[x][y;\sigma,\delta]$ is isomorphic to one of the following algebras: $\Bbbk[x,y]$, a quantum plane, a quantum Weyl algebra, or a differential Ore extension. Since any automorphism of $\Bbbk[x]$ is such that $\sigma(x)=qx+b$, for some $q\in\Bbbk^\ast$ and $b\in\Bbbk$, following the proofs of \cite{AwamiVanDenBerghVanOystaeyen,AlevDumas}, we have:

\begin{itemize}

\item If $q\neq1$, then $S\simeq \Bbbk[x'][y;\sigma',\delta]$, where $x'=x+b(q-1)^{-1}$ and $\sigma'(x')=qx'$. Now, if $p(x)\in\Bbbk[x]$ and $r\in\Bbbk$ are such that
$\delta(x')=p(x')(1-q)x'+r$, then $(y-p(x'))x' =
qx'(y-p(x'))+r$. If $r=0$, it is easy to see that
$S\simeq \Bbbk_q[x',y']=\Bbbk\langle x',y'\mid y'x'=qx'y'\rangle$ for a suitable change of variables. If $r\neq0$, with $y''=r^{-1}(y-p(x'))$, we obtain
$S\simeq A_1^q(\Bbbk):=\Bbbk\langle x',y''\mid y''x'=qx'y''+1\rangle$.

\item If $q=1$ and $b=0$, then $S$ is either $\Bbbk[x,y]$ or a differential Ore extension, $S=\Bbbk[x][y;\delta]$.

\item If $q=1$ and $b\neq0$, then $S\simeq \Bbbk[x'][y;\sigma',\delta']$ where $x'=b^{-1}x$ $\sigma'(x')=x'+1$, and $\delta'(x')=b^{-1}\delta(x)$. Since
$(y+b^{-1}\delta(x))x'=(x'+1)(y+b^{-1}\delta(x))$, it follows that
$S\simeq \Bbbk[y'][x';-y'\partial_{y'}]$.

\end{itemize}

Let $A_q
=\Bbbk\langle x,y\mid xy=qyx\rangle$, where $q\in\Bbbk^\ast$ and $q\neq 1$. The automorphism computation of Alev and Chamarie \cite[Proposition 1.4.4]{AlevChamarie} gives, for $q\neq\pm1$,
\[
 \Aut(A_q)=\{x\mapsto\alpha x,\ y\mapsto\beta y\}
 \simeq(\Bbbk^*)^2.
\]

For $q=-1$ one has instead $\Aut(A_{-1})\simeq(\Bbbk^*)^2\rtimes \mathbb{Z}_2$, where the nontrivial element interchanges $x$ and $y$. Thus, for every $q\neq1$, all automorphisms of $A_q$ have degree one.
The same description of the automorphism group also shows that $A_q$ admits no nonzero locally nilpotent derivations. Indeed, suppose that $D\in\LND(A_q)$. Since $D$ is locally nilpotent, the exponential
\[
\exp(sD)=\sum_{n\geq0}\frac{s^nD^n}{n!}
\]
defines an automorphism of $A_q$ for every $s\in\Bbbk$, and the map
\[
\mathbb G_a\longrightarrow\Aut(A_q),
\quad
s\longmapsto\exp(sD),
\]
is a homomorphism of algebraic groups. Since $\mathbb G_a$ is connected and $\exp(0D)=\id$, its image is contained in the identity component of $\Aut(A_q)$, which is a torus. A torus admits no nontrivial algebraic group homomorphism from $\mathbb G_a$. Consequently, $\exp(sD)=\id$, for every $s\in\Bbbk$. Comparing the coefficient of $s$ in
$\exp(sD)(a)=a$, for $a\in A_q$, gives $D(a)=0$. Therefore, $D=0$.

Let $A_q^1(\Bbbk)
=\Bbbk\langle X,Y\mid YX-qXY=1\rangle$, where $q\in\Bbbk^\ast\setminus\{1\}$. This algebra can be realized as the quantum generalized Weyl algebra $A(\Bbbk[h],q,h-1)$. Since the defining polynomial $h-1$ is not a monomial, \cite[Proposition 2.3]{SuarezVivas} gives
\[
\LFD(A_q^1(\Bbbk))=\Bbbk H,
\]
where $H(X)=X$ and $H(Y)=-Y$. Consequently, $\LND(A_q^1(\Bbbk))=\{0\}$, because, for $c\neq0$, $(cH)^n(X)=c^nX\neq0$, for every $n\geq1$. By \cite[Theorem B]{SuarezVivas}, if $q\neq-1$, every
automorphism of $A_q^1(\Bbbk)$ has the form
\[
X\longmapsto aX,
\quad
Y\longmapsto a^{-1}Y,
\quad a\in\Bbbk^\ast.
\]

If $q=-1$, every automorphism is either of this form or of
the form
\[
X\longmapsto aY,
\quad
Y\longmapsto a^{-1}X,
\quad a\in\Bbbk^\ast.
\]
These descriptions also apply when $q$ is a root of unity.
Thus every automorphism of $A_q^1(\Bbbk)$ has degree one.
In particular, for every derivation $D$ of $A_q^1(\Bbbk)$,
the degrees of the automorphisms in
$\Aut_D(A_q^1(\Bbbk))$ are bounded.

Therefore, for either the quantum plane $A=A_q$ or the first
quantum Weyl algebra $A=A_q^1(\Bbbk)$, with
$q\in\Bbbk^\ast\setminus\{1\}$, we have
\[
\LND(A)=\{0\},
\qquad
\deg(\rho)=1
\quad\text{for every }\rho\in\Aut(A).
\]
Consequently, for every nonzero derivation $D$ of $A$,
neither local nilpotence nor the existence of automorphisms
of arbitrarily large degree in $\Aut_D(A)$ occurs.
Hence the equivalence in Pan's criterion holds vacuously
for nonzero derivations of these algebras.
The restriction $D\neq0$ is essential: the zero derivation
is locally nilpotent, whereas
$\Aut_0(A)=\Aut(A)$ has bounded degree.

Finally, together with Pan's theorem \ref{thm_Pan} and
Theorem \ref{main_Ah}, these observations establish the
equivalence for all nonzero derivations of the Ore extensions
of $\Bbbk[x]$, except for the classical first Weyl algebra $A_1$. For $A_1$, recall that Theorem \ref{Pan_A1_LF} establishes the equivalence under the additional assumption that the derivation is locally finite.

\section*{Acknowledgements}
S.\ Lopes was partially supported by CMUP -- Centro de Matem\'atica da Universidade do Porto, member of LASI, which is financed by national funds through FCT -- Funda\c c\~ao para a Ci\^encia e a Tecnologia, I.P., under the project with reference UID/00144/2025, doi: \url{https://doi.org/10.54499/UID/00144/2025}. R.\  Baltazar was partially supported by Rio Grande do Sul Research Foundation -- FAPERGS, project n. 24/2551-0001560-4.


\bibliographystyle{amsplain}
\bibliography{references}

@article{AlevChamarie,
  author  = {Alev, J. and Chamarie, M.},
  title   = {D{\'e}rivations et automorphismes de quelques alg{\`e}bres quantiques},
  journal = {Communications in Algebra},
  volume  = {20},
  number  = {6},
  pages   = {1787--1802},
  year    = {1992},
  doi     = {10.1080/00927879208824431}
}

@article{AlimbaevNaurazbekovaKozybaev,
  author  = {Alimbaev, A. A. and Naurazbekova, A. S. and Kozybaev, D. Kh.},
  title   = {Linearization of automorphisms and triangulation of derivations of free algebras of rank 2},
  journal = {Siberian Electronic Mathematical Reports},
  volume  = {16},
  pages   = {1133--1146},
  year    = {2019},
  doi     = {10.33048/semi.2019.16.077}
}

@article{Re1968,
  author  = {Rentschler, R.},
  title   = {Op{\'e}rations du groupe additif sur le plan affine},
  journal = {C. R. Acad. Sci. Paris S{\'e}r. A--B},
  volume  = {267},
  pages   = {A384--A387},
  year    = {1968}
}

@article{V92,
  author  = {van den Essen, A.},
  title   = {Locally finite and locally nilpotent derivations with applications to polynomial flows and polynomial morphisms},
  journal = {Proceedings of the American Mathematical Society},
  volume  = {116},
  number  = {3},
  pages   = {861--871},
  year    = {1992},
  doi     = {10.1090/S0002-9939-1992-1111440-5}
}

@article{BaltazarPan2021,
  author  = {Baltazar, R. and Pan, I.},
  title   = {On the automorphism group of a polynomial differential ring in two variables},
  journal = {Journal of Algebra},
  volume  = {576},
  pages   = {197--227},
  year    = {2021},
  doi     = {10.1016/j.jalgebra.2021.02.009}
}

@article{SuarezVivas,
  author  = {Su{\'a}rez-Alvarez, M. and Vivas, Q.},
  title   = {Automorphisms and isomorphisms of quantum generalized {Weyl} algebras},
  journal = {Journal of Algebra},
  volume  = {424},
  pages   = {540--552},
  year    = {2015},
  doi     = {10.1016/j.jalgebra.2014.08.045}
}

@misc{BaltazarSilvaMartini,
  author = {Baltazar, R. and Duarte Silva, L. and Martini, G.},
  title  = {On the isotropy of differential {Ore} extensions},
  year   = {2026},
  note   = {arXiv:2604.17161, \url{https://arxiv.org/abs/2604.17161}}
}

@article{BLO2015,
  author  = {Benkart, G. and Lopes, S. A. and Ondrus, M.},
  title   = {A parametric family of subalgebras of the {Weyl} algebra {I}. Structure and automorphisms},
  journal = {Transactions of the American Mathematical Society},
  volume  = {367},
  number  = {3},
  pages   = {1993--2021},
  year    = {2015},
  doi     = {10.1090/S0002-9947-2014-06144-8}
}

@article{CrodeShestakov,
  author  = {Crode, S. D. and Shestakov, I. P.},
  title   = {Locally nilpotent derivations and automorphisms of free associative algebra with two generators},
  journal = {Communications in Algebra},
  volume  = {48},
  number  = {7},
  pages   = {3091--3098},
  year    = {2020},
  doi     = {10.1080/00927872.2020.1729363}
}

@article{Czerniakiewicz1971,
  author  = {Czerniakiewicz, A.},
  title   = {Automorphisms of a free associative algebra of rank 2},
  journal = {Bulletin of the American Mathematical Society},
  volume  = {77},
  number  = {6},
  pages   = {992--994},
  year    = {1971},
  doi     = {10.1090/S0002-9904-1971-12830-6}
}

@article{Dixmier,
  author  = {Dixmier, J.},
  title   = {Sur les alg{\`e}bres de {Weyl}},
  journal = {Bulletin de la Soci{\'e}t{\'e} Math{\'e}matique de France},
  volume  = {96},
  pages   = {209--242},
  year    = {1968}
}

@article{DrenskyMakarLimanov,
  author  = {Drensky, V. and Makar-Limanov, L.},
  title   = {Locally nilpotent derivations of free algebra of rank two},
  journal = {SIGMA},
  volume  = {15},
  pages   = {091},
  note    = {10 pp.},
  year    = {2019},
  doi     = {10.3842/SIGMA.2019.091}
}

@article{ISF2021,
  author  = {Kaygorodov, I. and Lopes, S. A. and Mashurov, F.},
  title   = {Actions of the additive group {$G_a$} on certain noncommutative deformations of the plane},
  journal = {Communications in Mathematics},
  volume  = {29},
  number  = {2},
  pages   = {269--279},
  year    = {2021},
  doi     = {10.2478/cm-2021-0024}
}

@article{AlevDumas,
  author  = {Alev, J. and Dumas, F.},
  title   = {Invariants du corps de {Weyl} sous l'action de groupes finis},
  journal = {Communications in Algebra},
  volume  = {25},
  number  = {5},
  pages   = {1655--1672},
  year    = {1997},
  doi     = {10.1080/00927879708825943}
}

@article{AwamiVanDenBerghVanOystaeyen,
  author  = {Awami, M. and Van den Bergh, M. and Van Oystaeyen, F.},
  title   = {Note on derivations of graded rings and classification of differential polynomial rings},
  journal = {Bulletin de la Soci{\'e}t{\'e} Math{\'e}matique de Belgique. S{\'e}rie A},
  volume  = {40},
  number  = {2},
  pages   = {175--183},
  year    = {1988}
}

@article{Jung,
  author  = {Jung, H. W. E.},
  title   = {{\"U}ber ganze birationale Transformationen der Ebene},
  journal = {Journal f{\"u}r die reine und angewandte Mathematik},
  volume  = {184},
  pages   = {161--174},
  year    = {1942},
  doi     = {10.1515/crll.1942.184.161}
}

@article{MakarLimanov,
  author  = {Makar-Limanov, L. G.},
  title   = {Automorphisms of a free algebra with two generators},
  journal = {Functional Analysis and Its Applications},
  volume  = {4},
  number  = {3},
  pages   = {262--264},
  year    = {1970},
  doi     = {10.1007/BF01075252}
}

@article{Pan2022,
  author  = {Pan, I.},
  title   = {A characterization of local nilpotence for dimension two polynomial derivations},
  journal = {Communications in Algebra},
  volume  = {50},
  number  = {5},
  pages   = {1884--1888},
  year    = {2022},
  doi     = {10.1080/00927872.2021.1992631}
}

@article{vanDerKulk,
  author  = {van der Kulk, W.},
  title   = {On polynomial rings in two variables},
  journal = {Nieuw Archief voor Wiskunde},
  series  = {3},
  volume  = {1},
  pages   = {33--41},
  year    = {1953}
}

\bigskip

\noindent
\textsc{Rene Baltazar}\\
Instituto de Matem\'atica, Estat\'istica e F\'isica, Universidade Federal do Rio Grande -- FURG, Rua Bar\~ao do Cahy, 125, 95500--000, Santo Ant\^onio da Patrulha/RS, Brazil.\\
\texttt{renebaltazar.furg@gmail.com}

\bigskip

\noindent
\textsc{Samuel A. Lopes}\\
CMUP, Departamento de Matem\'atica, Faculdade de Ci\^encias, Universidade do Porto, Rua do Campo Alegre s/n, 4169--007, Porto, Portugal.\\
\texttt{slopes@fc.up.pt}

\bigskip 

\noindent
\noindent
\textsc{Oscar Armando Hernandez Morales}\\
Faculdade de Matem\'atica, Universidade Federal do Par\'a -- UFPA, Rua Augusto Corr\^ea, 01, 66075--110, Bel\'em/PA, Brazil.\\
\texttt{oscar@ufpa.br}

\end{document}